\theoremstyle{plain}
\newtheorem{theorem}{Theorem}[section]
\newtheorem{conjecture}[theorem]{Conjecture}
\newtheorem{proposition}[theorem]{Proposition}
\newtheorem{lemma}[theorem]{Lemma}
\theoremstyle{definition}
\newtheorem{definition}[theorem]{Definition}
\newcommand\C{{\mathbb{C}}}
\newcommand\R{{\mathbb{R}}}
\renewcommand\P{{\mathbb{P}}}
\newcommand\E{{\mathbb{E}}}  
\newcommand\Var{\mathrm{Var}}
\newcommand{\ep}{\varepsilon}
\newcommand{\ba}{\[\begin{aligned}}
\newcommand{\ea}{\end{aligned}\]}
\let\oldtocsection=\tocsection
\let\oldtocsubsection=\tocsubsection
\let\oldtocsubsubsection=\tocsubsubsection
\renewcommand{\tocsection}[2]{\hspace{0em}\oldtocsection{#1}{#2}}
\renewcommand{\tocsubsection}[2]{\hspace{1em}\oldtocsubsection{#1}{#2}}
\renewcommand{\tocsubsubsection}[2]{\hspace{2em}\oldtocsubsubsection{#1}{#2}}
\begin{document}
\title[Convergence of higher derivatives]{Convergence of higher derivatives of random polynomials with independent roots}

\author{J\"urgen Angst $^1$, Oanh Nguyen $^2$ and Guillaume Poly $^3$}


\address{$^1$ Univ Brest, CNRS UMR 6205, Laboratoire de Mathematiques de Bretagne Atlantique, F--29200 Brest, France} 
\address{$^2$ Division of Applied Mathematics, Brown University, Providence, RI 02906, USA}
\address{$^3$ Nantes Universit\'e, CNRS, Laboratoire de Math\'ematiques Jean Leray, LMJL,
F--44000 Nantes, France.}

\email{jurgen.angst@univ-brest.fr}
\email{oanh$\_$nguyen1@brown.edu}
\email{guillaume.poly@univ-nantes.fr}

\maketitle

 \begin{abstract}
 	Let $\mu$ be a probability measure on $\C$, and let $P_n$ be the random polynomial whose zeros are sampled independently from $\mu$. We study the asymptotic distribution of zeros of high-order derivatives of $P_n$. We show that, for large classes of measures $\mu$, the empirical distribution of zeros of the $k$-th derivative converges back to $\mu$ for all derivative orders $k=o(n/\log n)$. This includes all discrete measures and a broad family of measures satisfying a mild dimension-nondegeneracy condition. 
 	
 	We further establish a robustness result showing that, for arbitrary $\mu$, even after adding a vanishing proportion of roots drawn from a dimension-nondegenerate perturbation, the derivative zero measures still converge back to $\mu$. These results break the previously known logarithmic barrier on the order of differentiation and demonstrate that the limiting root distribution is preserved under differentiation of order growing nearly linearly with the degree.
  \end{abstract}
 
 \maketitle

 
 \section{Introduction}
 In the deterministic setting, the relation between the roots of a polynomial and those of its derivatives remains surprisingly intricate. Classical results such as the Gauss–Lucas theorem give only broad geometric constraints, and several fundamental questions about the behavior of critical points are still open; see, for example, \cite{Tao22, GLSv07}. Because the deterministic theory is difficult to push much further, it is natural to turn to randomness as a way to uncover underlying patterns. In this context, differentiation blends the influence of all zeros, so the critical points capture the overall shape of the root configuration. Random polynomials therefore provide a flexible setting in which this interaction becomes amenable to analysis and connects naturally with ideas from complex analysis and potential theory. Understanding the asymptotic distribution of critical points thus becomes both tractable and conceptually compelling once the random root configuration is specified. Let us specify the model we consider throughout the article and described the main related literature.
 
 Let $\mu$ be a probability measure on $\mathbb{C}$, and let $\xi_1, \xi_2, \ldots$ be i.i.d.\ random variables with distribution $\mu$.  
 For each $n \geq 1$, define the random polynomial
 \[
 P_n(z) = \prod_{j=1}^n (z - \xi_j).
 \]
 By the Law of Large Numbers, the empirical measure $\mu_n:=\frac1n \sum_{i=1}^{n} \delta_{\xi_i}$ converges almost surely to $\mu$ as $n \to \infty$. Pemantle and Rivin~\cite{pemantle2013distribution} conjectured that the empirical distribution of the critical points of $P_n$ would also converge to $\mu$.  
 Specifically, if
 \[
 \mu_{P_n'} := \frac{1}{n-1} \sum_{z \in \mathbb{C} : P_n'(z) = 0} \delta_z
 \]
 denotes the empirical measure of $P_n'$, then $\mu_{P_n'}$ converges to $\mu$ as $n \to \infty$.  
 They proved this under the additional assumption that $\mu$ has finite $1$-energy, and Subramanian~\cite{subramanian2012distribution} established the result when $\mu$ is supported on the unit circle.  
 
 \medskip
 A major advance was achieved by Kabluchko~\cite{Kabluchko2015}, who confirmed the conjecture in full generality for all probability measures $\mu$, asserting that $\mu_{P_n'}$ converges in probability to $\mu$  in the space of complex probability measures equipped with the topology of the convergence in distribution.  Answering a question raised by Kabluchko, together with Malicet, the first and third author \cite{angst2024almost} proved that in fact, $\mu_{P_n'}$ converges almost surely to $\mu$ in the same topology of convergence in distribution. We note that the former is equivalent to saying that for all bounded continuous test functions $\varphi$,
 \[
 \int_{\mathbb{C}} \varphi(x) \, d\mu_{P_n'}(x)
 \;\xrightarrow[n\to\infty]{\mathbb{P}}\;
 \int_{\mathbb{C}} \varphi(x) \, d\mu(x),
 \]
 while the latter is equivalent to  
 \[
 \int_{\mathbb{C}} \varphi(x) \, d\mu_{P_n'}(x)
 \;\xrightarrow[n\to\infty]{\text{a.s.}}\;
 \int_{\mathbb{C}} \varphi(x) \, d\mu(x).
 \]
 
 \vspace{5mm}
 The study of higher derivatives followed naturally.  
 For $1 \leq k < n$, define
 \[
 \mu_{P_n^{(k)}} := \frac{1}{n-k} \sum_{z \in \mathbb{C} : P_n^{(k)}(z) = 0} \delta_z.
 \]

 Byun, Lee, and Reddy~\cite{ByunLeeReddy2022} showed that for each fixed $k$, one still has $\mu_{P_n^{(k)}}$ converges to $\mu$ in probability.
 O’Rourke and Steinerberger~\cite{ORourkeSteinerberger2021} proposed a broader conjectural picture for large derivatives: for $t \in (0,1)$, the empirical measure of the zeros of $P_n^{(\lfloor tn \rfloor)}$ converges to a deterministic limit $\mu_t$, whose logarithmic potential should solve a certain PDE.  
 This PDE has connections to earlier work ~\cite{alazard2022dynamics, kiselev2022flow} and, in special cases, the conjecture has been verified when $\mu$ has real support~\cite{hoskins2023dynamics} using free probability.  
 In the case $t=0$, their prediction recovers $\mu_t = \mu$, leading to the following conjecture.
 \begin{conjecture}\label{conj}
 	For all probability measures $\mu$ on the complex plane, for any sequence $k_n$ satisfying $k_n=o(n)$, it holds that almost surely, $\mu_{P_n^{(k_n)}}$ converges  to $ \mu$.
 \end{conjecture}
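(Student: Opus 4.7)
The plan is to reduce weak convergence $\mu_{P_n^{(k_n)}}\to\mu$ to almost-sure pointwise convergence of the associated logarithmic potentials, following the strategy of Kabluchko \cite{Kabluchko2015} and its almost-sure refinement \cite{angst2024almost}. Writing $U_\nu(z):=-\int\log|z-w|\,d\nu(w)$, one has
\[
U_{\mu_{P_n^{(k_n)}}}(z) = -\frac{1}{n-k_n}\log\bigl|P_n^{(k_n)}(z)\bigr| + \frac{1}{n-k_n}\log\!\left(\frac{n!}{(n-k_n)!}\right),
\]
so the task reduces to proving $\frac{1}{n-k_n}\log|P_n^{(k_n)}(z)|\to -U_\mu(z)$ almost surely for Lebesgue-almost every $z$, followed by a standard $L^1_{\mathrm{loc}}$ upgrade and the weak-convergence criterion for zero empirical measures of polynomials.

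The key algebraic identity I would exploit is
\[
\frac{P_n^{(k)}(z)}{P_n(z)} = k!\,T_k(z), \qquad T_k(z) := \sum_{S\subset\{1,\dots,n\},\,|S|=k}\,\prod_{j\in S}\frac{1}{z-\xi_j},
\]
valid whenever $P_n(z)\neq 0$. Substituting and combining with the previous display, the pointwise log-potential convergence splits into three pieces: (i) the law-of-large-numbers statement $\frac{1}{n}\sum_j\log|z-\xi_j|\to -U_\mu(z)$ almost surely, which holds wherever $U_\mu(z)<\infty$; (ii) a deterministic Stirling contribution equal to $\tfrac{k_n}{n-k_n}\log(n/k_n)+o(1)$, which tends to $0$ whenever $k_n=o(n)$; and (iii) the genuinely random term $\frac{1}{n-k_n}\log|T_{k_n}(z)|$. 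All the conjectural content resides in~(iii).

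For an upper bound on $|T_k(z)|$, I would exploit the generating identity $\sum_k T_k(z)\,t^k=\prod_j(1+t/(z-\xi_j))$ via a Cauchy integral over a circle of optimally chosen radius $r\asymp k/(nC_\mu(z))$, where $C_\mu(z):=\int|z-w|^{-1}d\mu(w)$; after invoking (i), this yields $\log|T_k(z)|\le k\log(enC_\mu(z)/k)+o(n)$ almost surely, and dividing by $n-k_n$ gives $o(1)$ in the full range $k_n=o(n)$. The real difficulty is the matching lower bound: the $\binom{n}{k_n}$ complex summands defining $T_{k_n}(z)$ are strongly correlated, so standard anti-concentration methods do not apply, and the crude bound $|\log|T_k||=O(k\log n)$ is only useful when $k=o(n/\log n)$. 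My strategy would be to first smooth $\mu$ by convolving with a small disk $D(0,\eta)$, so that the mollified Cauchy transform becomes uniformly Lipschitz and the analogue of $\log T_k$ admits a clean Taylor expansion around its expectation; then apply a bounded-difference martingale or Efron--Stein concentration bound to obtain $|\log T_k-\E[\log T_k]|=o(n)$ almost surely in the smoothed model; finally, remove the regularization through an analytic continuation argument exploiting the dimension-nondegeneracy of $\mu$.

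The principal obstacle is precisely this anti-concentration step for arbitrary $\mu$, where one must rule out near-total cancellation among the complex summands of $T_{k_n}$ at the full $o(n)$ scale; no sharp estimate is presently available without some regularity, which is the source of the $o(n/\log n)$ barrier announced in the abstract. For general $\mu$, the announced robustness result circumvents the obstacle: by adding $o(n)$ auxiliary i.i.d.\ roots from a dimension-nondegenerate perturbation, the resulting Cauchy transform acquires the analytic regularity needed to execute the smoothing argument, while the perturbation is negligible at the level of empirical measures and does not shift the limit away from $\mu$. Finally, the upgrade from convergence in probability to almost-sure convergence proceeds via a Borel--Cantelli argument on a countable determining family of test functions $\varphi$, combined with Efron--Stein concentration for the symmetric statistics $\int\varphi\,d\mu_{P_n^{(k_n)}}$.
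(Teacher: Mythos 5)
The statement you are addressing is labeled a \emph{conjecture} in the paper, and the paper does not claim a proof of it in full generality; it only establishes the conclusion under additional hypotheses (discreteness, dimension-nondegeneracy, Doeblin condition, or a dimension-nondegenerate perturbation) and, except for the discrete case, only at the stricter scale $k_n = o(n/\log n)$. Your proposal correctly identifies the reduction to controlling $\frac{1}{n}\log|S_n(z)|$ with $S_n = P_n^{(k)}/(k!\,P_n) = \sum_{|S|=k}\prod_{j\in S}(z-\xi_j)^{-1}$, and correctly locates the genuine content of the conjecture in the anti-concentration lower bound on $|S_n(z)|$ — this matches the paper's diagnosis. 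Your Cauchy-integral upper bound via the generating function $\prod_j(1+t/(z-\xi_j))$, giving $\log|T_k|\le k\log(enC_\mu(z)/k)+o(n)$, is in fact \emph{sharper} than the paper's crude bound $\log\|S_n\|\ll k\log n$ (which only yields $o(n)$ for $k=o(n/\log n)$); if one could prove a matching lower bound, this would be needed to reach the full range $k=o(n)$. Note, however, that your bound presupposes $C_\mu(z)<\infty$, which fails for e.g.\ $z$ in the support of a singular $\mu$, and the paper deliberately sidesteps this by integrating Jensen's formula against M\"obius transformations rather than working pointwise.

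The proposal does not, however, constitute a proof, and the gap is precisely where you say it is. The smoothing route (convolve $\mu$ with a small disk, apply a bounded-difference or Efron--Stein bound to $\log T_k$, then ``deregularize by analytic continuation exploiting dimension-nondegeneracy'') does not close it: $\log|T_k|$ has unbounded negative excursions near the zero set of $T_k$, so bounded-difference and Efron--Stein inequalities are not directly applicable, and the deregularization step has nothing to lean on because dimension-nondegeneracy is \emph{not} a hypothesis of the conjecture — it is an extra assumption imposed in Theorem~1.3(2). The paper's actual anti-concentration mechanism under that extra assumption is quite different from what you sketch: it conditions on the samples drawn from the non-degenerate component $\nu$, peels off one variable $Y_i=(a-\xi_i)^{-1}$ at a time to obtain the recursion $\P(|S_{k,n}-w|\le\gamma_n^k)\le \alpha_n + \P(|S_{k-1,n-1}|\le\gamma_n^{k-1})$, and then feeds the Frostman-type bound $\nu(D(x,r))\le r^q$ into the one-variable small-ball probability $\alpha_n$. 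Under the Doeblin condition it instead uses a Nummelin splitting of $Y_i$ into a uniform component plus remainder and applies Carbery--Wright. Neither device is available for a general $\mu$, which is why the statement remains a conjecture and why the $o(n/\log n)$ barrier you cite survives.
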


 \vspace{5mm}
 The current best result is due to Michelen--Vu who showed in  \cite{michelen2024zeros} that convergence in probability holds for all $k:=k_n\le \frac{\log n}{5\log\log n}$ and in \cite{michelen2024almost} that almost sure convergence holds for all fixed $k$. To see that $\log n$ is a natural barrier for this kind of result, we first recall the standard distributional identity $\frac{1}{2\pi}\Delta \log |z-\xi| = \delta_\xi$ which connects the logarithmic potential $\frac{1}{n}\log |P_n(z)|$ of $P_n$ to its empirical measure:  
 $$\frac{1}{2\pi n} \Delta \log |P_n(z)| = \mu_{P_n}.$$
 Now, if we look at the quotient
 \begin{equation}\label{eq:S:xi}
 	S_n =	S_{k, n}(z):= \frac{P^{(k)}_n(z)}{k!P_n(z)},
 \end{equation}
 the above relationship gives
 \[
 \frac{1}{2\pi n} \Delta \log \left| S_n(z) \right|
 = \frac{1}{n} \sum_{\xi \in \mathbb{C} : P_n^{(k)}(\xi) = 0} \delta_\xi
 - \frac{1}{n} \sum_{\xi \in \mathbb{C} : P_n(\xi) = 0} \delta_\xi
 = \frac{n-k}{n} \,\mu_{P_n^{(k)}} - \mu_{P_n}.
 \]
 By the Law of Large Numbers, the empirical measure \(\mu_{P_n}\) converges to \(\mu\) as \(n \to \infty\).
 Consequently, when \(k = o(n)\), establishing the convergence \(\mu_{P_n^{(k)}} \to \mu\) reduces to proving that
 \(\frac{1}{n} \log |S_n(z)|\) tends to \(0\). Controling the growth of $\log |S_n(z)|$ amounts to proving an upper bound on $|S_n(z)|$ and a lower bound on $|S_n(z)|$, which can be quantified as proving
 \begin{equation}
 	\P(|S_n(z)|\le e^{-\ep n}) \notag
 \end{equation}
 to be sufficiently small as $n\to\infty$.
 
 Moreover, rewriting $S_n$ as 
 $$S_n(z)= \sum_{i_1<i_2<\dots<i_k} Y_{i_1}\dots Y_{i_k}$$
 where $Y_i = \frac{1}{z-\xi_i}$ are iid, we see that $S_n$ is a degree $k$ polynomials of the random variables $Y_i$. This directly connects this problem to the anticoncentration problem for degree-$k$ polynomials of independent random variables. 
 
 The latter has been studied for its intrinsic interest and has applications in various areas, including random matrix theory and subgraph counting; see, for instance, Meka--Nguyen--Vu~\cite{meka2015anti}, Kwan--Sudakov--Tran~\cite{kwan2019anticoncentration}, Alon--Hefetz--Krivelevich--Tyomkyn\\~\cite{alon2020edge}, and Fox--Kwan--Sauermann~\cite{fox2021combinatorial}. The above list is far from exhaustive.
 We note that $\log n$ is a natural barrier for the degree, as can be seen in the following example of a degree $k$ polynomials on $n=mk$ random variables $(Y_{ij})$ 
 \begin{equation}\label{eq:counter}
 	S = \sum_{i=1}^{m} Y_{i1}\dots Y_{ik}
 \end{equation}
 where $i=1, \dots, m$, $j=1, \dots, k$ and $Y_{ij}$ are iid with any distribution having $\P(Y_{ij} = 0) =q$ for some $q\in (0, 1)$  (say $q=1/3$ and $Y_{ij}$ is uniform in $\{0, \pm 1\}$) and $m = (1-q)^{-k}$. This corresponds to $n = k(1-q)^{-k}$ or $k=\Theta(\log n)$. We have
 \begin{equation}\label{key}
 	\P(S = 0)\ge \P(Y_{i1}\dots Y_{ik} = 0)^{m} = (1-(1-q)^{k})^{(1-q)^{-k}} \approx e^{-1}\notag
 \end{equation}
 which clearly does not decay to $0$.

Breaking this natural barrier, our main theorem shows that for non-pathological measures $\mu$, Conjecture \ref{conj} holds all the way to $k = o(n/\log n)$. Before stating the theorem, let us recall the notion of local dimension of a measure $\mu$ at a given point $x\in \C$:
\[
\underline{\dim}_{\mu}(x)
:= \liminf_{r\to 0}
\frac{\log \mu(D(x,r))}{\log r}.
\]
\begin{definition}[Dimension-nondegenerate measures] \label{def:dim}
	 We say that a measure $\mu$ is {\it dimension-nondegenerate} if there exists a measure $\nu\le \mu$ (namely $\nu(A)\le \mu(A)$ for all Borel sets $A$) and a set $E\subset \C$ of positive measure $\nu(E)>0$ such that for all $x\in E$, 
	 $$\underline{\dim}_{\nu}(x)>0.$$
\end{definition}

 \begin{theorem}[Main theorem]\label{thm:main}  Let $\mu$ be an element in the space of probability measures on $\C$ equipped with the topology of the convergence in distribution. 
	\begin{enumerate} 
		\item \label{cond:disc}[Discrete measures] If $\mu$ is discrete, then for any sequence $k_n=o(n)$, almost surely with respect to $\P$, the sequence of measures $\mu_{P_n^{(k_n)}}$ converges to $\mu$ as $n\to \infty$.
		\item \label{cond:nondeg} [Dimension-nondegenerate measures] Assume that $\mu$ is dimension-nondegenerate then for any sequence $k_n=o(\frac{n}{\log n})$, almost surely with respect to $\P$, the sequence of measures $\mu_{P_n^{(k_n)}}$ converges to $\mu$ as $n\to \infty$. 
		\end{enumerate}
\end{theorem}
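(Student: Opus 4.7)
The plan is to exploit the identity
\[
\frac{1}{2\pi n}\Delta \log|S_n(z)| \;=\; \frac{n-k}{n}\,\mu_{P_n^{(k)}} - \mu_{P_n},
\]
so that, combined with the almost sure convergence $\mu_{P_n}\to \mu$ coming from the strong law of large numbers, the theorem reduces to showing that $\tfrac{1}{n}\log|S_n(z)|\to 0$ almost surely in $L^1_{\mathrm{loc}}(\C)$. A standard argument extracts this from Lebesgue-a.e.~pointwise convergence together with local uniform integrability, which in turn follows from the two pointwise tail estimates $\P(|S_n(z)|\ge e^{\ep n})$ and $\P(|S_n(z)|\le e^{-\ep n})$ being summable in $n$ for every $\ep>0$ and Lebesgue-a.e.~$z$, so that Borel--Cantelli applies. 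The upper bound is the easier half: for Lebesgue-a.e.~$z$ and sufficiently small $p>0$ the moment $\E|Y_1|^p=\int|z-w|^{-p}\,d\mu(w)$ is finite, so expanding $|e_k(Y_1,\ldots,Y_n)|^p$ and applying Markov gives $\log|S_n(z)|\le C k\log(n/k)+O(k)$ with overwhelming probability, which is $o(n)$ as soon as $k=o(n/\log n)$ (and as soon as $k=o(n)$ in the discrete case, where all moments of $Y_1$ are available for $z$ outside the countable atom set). The whole weight of the proof therefore rests on the lower bound.

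\textbf{Part (1): discrete case.} When $\mu=\sum_j p_j\delta_{a_j}$, setting $N_j=\#\{i\le n:\xi_i=a_j\}$ and $y_j=1/(z-a_j)$ collapses the sum defining $S_n$ to the explicit formula
\[
S_n(z)\;=\;\sum_{(k_j):\sum_j k_j=k}\prod_j\binom{N_j}{k_j}\,y_j^{k_j}.
\]
Restricting to the finitely many atoms of mass $\ge\eta$ (which capture $1-O(\eta)$ of $\mu$) and using the law of large numbers $N_j\sim np_j$ together with $k_j\le k\ll N_j$ on the dominant multi-indices, each $\binom{N_j}{k_j}$ is well-approximated by $N_j^{k_j}/k_j!$; summing the multinomial yields
\[
S_n(z)\;\approx\;\frac{1}{k!}\Bigl(\sum_j N_j\,y_j\Bigr)^{k}\;\approx\;\frac{n^k}{k!}\,C_\mu(z)^k,\qquad C_\mu(z):=\int\frac{d\mu(w)}{z-w}.
\]
Since $C_\mu(z)\neq 0$ for Lebesgue-a.e.~$z$, Stirling then gives $\tfrac{1}{n}\log|S_n(z)|=\tfrac{k}{n}\log(n/k)+O(k/n)=o(1)$ whenever $k=o(n)$. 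Quantifying the two approximations via standard large-deviation bounds for the multinomial vector $(N_j)$ and a controlled truncation of the small atoms delivers the summable tail required for Borel--Cantelli.

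\textbf{Part (2): dimension-nondegenerate case, and main obstacle.} In this regime the explicit collapse of $S_n$ is no longer available and the lower bound must come from a genuine anticoncentration inequality for the elementary symmetric polynomial $e_k(Y_1,\ldots,Y_n)$. The dimension hypothesis on the subordinate $\nu\leq\mu$ with $\underline{\dim}_\nu>0$ on a positive-mass set $E$ supplies, for Lebesgue-a.e.~$z$, a uniform L\'evy-concentration estimate $\P(\xi_i\in D(x,r))\leq C r^\alpha$ for $x$ in a positive-measure subset of $E$ and, after transport under the M\"obius map $\xi\mapsto 1/(z-\xi)$, the analogous bound $\P(|Y_i-y|\leq r)\leq C'r^\alpha$. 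The plan is then a decoupling / conditioning argument: split the $n$ variables into two blocks of comparable size and, conditioning on the outer block, write
\[
S_n\;=\;\sum_{j=0}^{k}e_j(Y_1,\ldots,Y_m)\,e_{k-j}(Y_{m+1},\ldots,Y_n),
\]
viewing $S_n$ as a polynomial of degree $k$ in the inner block whose lower-order coefficients are themselves elementary symmetric polynomials in the outer block. An induction on $k$ then combines a lower bound on $e_{k-1}$ of the outer block (the inductive hypothesis) with a Halász--Littlewood--Offord-type anticoncentration step powered by the L\'evy bound to produce $\P(|S_n(z)|\leq e^{-\ep n})\leq e^{-cn}$. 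The main obstacle is controlling the multiplicative price paid at each of the $k$ inductive levels: the combinatorial factor $\binom{n}{k}\sim(n/k)^k$ already contributes $k\log(n/k)$ to $\log|S_n|$, and each conditioning step costs a further logarithmic factor in the anticoncentration estimate, which is precisely what forces the restriction $k=o(n/\log n)$ and aligns with the fundamental barrier exhibited by the counterexample~\eqref{eq:counter}. Breaking this logarithmic factor without further structural hypotheses is the conceptual heart of the difficulty and the reason that the dimension-nondegenerate case stops short of the full conjectural regime $k=o(n)$.
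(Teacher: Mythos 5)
Your overall strategy --- reduce via the logarithmic potential of $S_n=P_n^{(k)}/(k!P_n)$ to an upper and a lower tail bound on $\log|S_n(z)|$ --- matches the paper's, though the paper does not use the $L^1_{\mathrm{loc}}$ route you sketch; instead it invokes the Jensen-formula / M\"obius-transformation machinery from \cite{angst2024almost} (Proposition 2.2 and Lemma 2.7 there), which packages the passage from pointwise bounds to convergence of measures and sidesteps the local uniform integrability you would otherwise need to verify.

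\textbf{Part (1).} Here there is a genuine gap: you have overlooked the elementary argument and proposed a substantially harder one. For a discrete $\mu=\sum_j p_j\delta_{z_j}$, the atom $z_j$ appears in $P_n$ with multiplicity $N_j$, hence in $P_n^{(k)}$ with multiplicity at least $N_j-k$. Since $N_j/n\to p_j$ a.s.\ and $k=o(n)$, this gives $\liminf_n\mu_{P_n^{(k)}}(\{z_j\})\ge p_j$ for every $j$, and since both $\mu_{P_n^{(k)}}$ and $\mu$ are probability measures, summing forces equality. No anticoncentration, no moment estimate, no Cauchy transform is needed, and there is no constraint beyond $k=o(n)$. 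Your multinomial-approximation route (collapsing $S_n$ to $\approx\frac{n^k}{k!}C_\mu(z)^k$) is not obviously wrong, but it carries real unresolved obstacles: you must control the contribution of the infinitely many small atoms in the truncation; the approximation $\binom{N_j}{k_j}\approx N_j^{k_j}/k_j!$ must be made uniform across the (exponentially many) multi-indices and converted into a bound with summable failure probability; and near the nodal set of $C_\mu$, where $|C_\mu(z)|$ is small but nonzero, you must quantify how small it can be, which is not automatic. None of these are sketched, and they are collectively at least as hard as Part (2).

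\textbf{Part (2).} Conceptually you are on the right track --- use dimension-nondegeneracy to get a L\'evy (Frostman) bound on $Y_i=1/(z-\xi_i)$ via the M\"obius transport, then run a conditional induction. But the decomposition you propose is different from the paper's and, as written, vague on the key step. You split $[n]$ into two blocks and write $S_n=\sum_{j}e_j(\text{inner})\,e_{k-j}(\text{outer})$; but conditioned on the outer block this is still a \emph{multivariate} degree-$k$ polynomial in the inner block, and it is not explained what a ``Hal\'asz--Littlewood--Offord step'' applied to it means, nor what the inductive hypothesis on $e_{k-1}(\text{outer})$ buys when the coefficients $e_{k-j}(\text{outer})$ for $j<k$ are also random and could be small. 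The paper instead peels off a single variable at a time, $S_{k,n}=Y_1\,S_{k-1,n-1}+S_{k-1,n}$, conditions on $S_{k-1,n-1}$ and $S_{k-1,n}$, and applies the L\'evy bound to $Y_1$ alone; iterating $k$ times gives $\P(|S_{k,n}|\le\gamma_n^k)\le k\alpha_n$ with $\alpha_n\lesssim\gamma_n^{q}=e^{-q\ep n/k}$, and the restriction $k=o(n/\log n)$ comes precisely from requiring $e^{-q\ep n/k}$ to be summable, not (as you suggest) from a per-step logarithmic price. The sequential peel-off is the cleaner route because each anticoncentration application is genuinely univariate, avoiding the multivariate HLO issue your block scheme raises.

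Separately, note the paper also first localizes $\nu$ to a set $E_{q,r_0}\subset D(0,R)$ on which $\nu(D(x,r))\le r^q$ holds uniformly (a countable exhaustion argument), and splits the sampling into a Bernoulli mixture so the conditional randomness lives entirely on $\nu$; your writeup asserts the L\'evy bound but does not isolate the uniform Frostman constant or the $\nu$-conditioned sample, which are needed to make the induction precise.
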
 
 
 Observe that if a measure $\mu$ contains a dimension-nondegenerate component, that is,
 \[
 \mu = (1-\varepsilon)\mu_0 + \varepsilon \nu, \quad \varepsilon>0,
 \]
 with $\nu$ dimension-nondegenerate, then $\mu$ itself is dimension-nondegenerate. Below, we list several examples of dimension-nondegenerate measures; any measure that includes such a component is automatically dimension-nondegenerate as well.
 
 \begin{enumerate}
   
 	\item \emph{Measures not entirely singular with respect to the Lebesgue measure.}
 	In the case that $\mu$ is not entirely singular, by the classical Radon-Nikodym's theorem, there exists a nonnegative, integrable function $f\not\equiv 0$ such that $\mu = f(z)\,dm_2 + \mu_s$ where $m_2$ is the Lebesgue measure on $\C$ and $\mu_s$ is singular with respect to $m_2$. There exists an $M>0$ such that $f_M:= f \text{1}_{f\le M} \not\equiv 0$. We then let $\nu = f_M dm_2$ and $E=\C$. For all $x\in \C$, $r>0$, we then have 
 	$$\nu(D(x, r))\le \pi M r^{2}$$
 	and as $\log r<0$ for $r<1$, we get
 	$$ \liminf_{r\to 0}\frac{\log \nu(D(x,r))}{\log r} \ge  \liminf_{r\to 0} \frac{\log \pi M r^{2}}{\log r}\ge 2>0$$
 	proving dimension-nondegeneracy.
 	
 	\item \emph{Measures satisfying a local Frostman condition on a positive-mass set.}
 	If there exist constants $s>0$, $C>0$, and a Borel set $E\subset\C$
 	with $\mu(E)>0$ such that
 	\[
 	\mu(D(x,r)) \le C r^{s}
 	\quad \text{for all } x\in E \text{ and all sufficiently small } r>0,
 	\]
 	then $\underline{\dim}_{\mu}(x)\ge s$ for all $x\in E$.

 	\item \emph{Measures supported on smooth curves.}
 	If $\mu$ is absolutely continuous with respect to arc-length measure
 	on a $C^1$ curve in $\C$, then $\underline{\dim}_{\mu}(x)=1$ for
 	$\mu$-a.e.\ $x$, hence $\mu$ is dimension-nondegenerate (see for example \cite{mattila1999geometry}). One of the most studied examples in this case is $\mu$ being the uniform measure on the unit circle.
 \end{enumerate}

  In the next theorem, although it is formally covered by
  Theorem~\ref{thm:main}, we present it separately because it is
  proved using fundamentally different methods and highlights a
  complementary mechanism of anti-concentration. Presenting different approaches illustrates the robustness of
  the phenomenon and provides additional insight into how zeros of
  high-order derivatives stabilize under different structural assumptions
  on the measure. We say that $\mu$ satisfies the Doeblin condition if there exist constants $c_0, r_0\in (0, \infty)$ and $z_0\in \C$ such that for any Borel measurable set $A\in B(z_0, r_0)$, it holds that
 \begin{equation}
 	\mu(A)\ge c_0 \text{Leb}(A).\label{cond:doeblin}
 \end{equation}
 In other words, a measure $\mu$ satisfies the Doeblin condition if it includes a component that is uniform on some disk.
 
 Recall the Cauchy-Stieljes transform $g_{\mu}$ of the measure $\mu$
 \begin{equation}
 	g_{\mu}(z):= \int_{\C} \frac{1}{z-u}d\mu(u).\label{def:CS}
 \end{equation}

 \begin{theorem}\label{thm:2} Let $\mu$ be an element in the space of probability measures on $\C$ equipped with the topology of the convergence in distribution. 
 	\begin{enumerate} 
 		\item \label{cond:0} [Doeblin condition] Assume that $\mu$ satisfies the Doeblin condition with parameters $c_0, r_0, z_0$. Then for any sequence $k_n=o(\frac{n}{\log n})$, almost surely with respect to $\P$, the sequence of measures $\mu_{P_n^{(k_n)}}$ converges to $\mu$ as $n\to \infty$. 
 		\item \label{cond:mean} [Generic measures] Assume that for Lebesgue-almost all $z\in \C$, $g_\mu(z)\neq 0$  and\\ $\int_{\C} \frac{1}{|z-u|^2}d\mu(u)<\infty$. Then for all $k = o(\sqrt n)$ the sequence of measures $\mu_{P_n^{(k_n)}}$ converges to $\mu$ in probability.
 	\end{enumerate}
 \end{theorem}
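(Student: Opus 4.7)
The common reduction, recalled in the introduction, is this: since $\tfrac{1}{2\pi n}\Delta\log|S_n(z)|=\tfrac{n-k}{n}\mu_{P_n^{(k)}}-\mu_{P_n}$ and $\mu_{P_n}\to\mu$ by the Law of Large Numbers, it suffices, after testing against $\Delta\varphi$ for $\varphi\in C_c^\infty(\C)$, to show that $\tfrac{1}{n}\log|S_n(z)|\to 0$ in the appropriate mode (in probability for part (2), almost surely for part (1)) for a.e.\ $z$, with sufficient uniform integrability in $z$ to pass the limit inside the integral. The two parts differ in the anti-concentration mechanism for $|S_n(z)|$.

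\emph{Part (2).} The hypotheses give $\E Y_1(z)=g_\mu(z)\ne 0$ and $\E|Y_1(z)|^2<\infty$ a.e., enabling a clean second-moment computation. Expanding $\E|S_n|^2$ over pairs $(I,J)$ of $k$-subsets of $[n]$ and grouping by the intersection size $\ell=|I\cap J|$ yields
\[
\E|S_n|^2=\sum_{\ell=0}^k\binom{n}{k}\binom{k}{\ell}\binom{n-k}{k-\ell}(\E|Y_1|^2)^{\ell}|g_\mu(z)|^{2(k-\ell)},
\]
with the $\ell=0$ summand equal to $|\E S_n|^2=\binom{n}{k}^2|g_\mu(z)|^{2k}$. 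Term by term, the $\ell$-th summand is at most $(C(z)k^2/n)^\ell/\ell!$ times the $0$-th, so $\Var(S_n)/|\E S_n|^2=O(k^2/n)$, which vanishes exactly when $k=o(\sqrt n)$. Chebyshev then yields $|S_n(z)|\asymp\binom{n}{k}|g_\mu(z)|^k$ in probability, hence $\tfrac{1}{n}\log|S_n(z)|=O(k\log n/n)=o(1)$. Passage to weak convergence of measures is by dominated convergence in $z$, justified by the $L^2$ control and the a.e.\ nonvanishing of $g_\mu$.

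\emph{Part (1).} Decompose $\mu=p\nu_0+(1-p)\tilde\mu$ with $\nu_0$ the normalized Lebesgue measure on $B(z_0,r_0)$ and $p=c_0\pi r_0^2$, and realize each root via an independent Bernoulli$(p)$: ``good'' roots ($i\in A$) are drawn from $\nu_0$, and by Hoeffding, $|A|\ge pn/2$ with probability $1-e^{-cn}$. Conditional on the partition and the bad roots, the good $\zeta_i$'s are iid uniform on a disk, hence log-concave with bounded density, and the expression
\[
P_n^{(k)}(z)=k!\sum_{|I|=k}\prod_{j\notin I}(z-\xi_j)=k!\,e_{n-k}\!\bigl(z-\xi_1,\dots,z-\xi_n\bigr)
\]
realizes $P_n^{(k)}(z)$ as a multilinear elementary-symmetric polynomial in the $\zeta_i$. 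A Carbery-Wright-type anti-concentration inequality for polynomials in log-concave samples, combined with the elementary-symmetric and multilinear structure to extract the sharper exponent $1/k$, yields
\[
\P\bigl(|S_n(z)|\le e^{-\varepsilon n}\bigr)\le C(z,\varepsilon)\exp(-c\varepsilon n/k)
\]
once $|P_n(z)|$ is bounded below by $e^{-\varepsilon n/3}$, which holds almost surely for a.e.\ $z$ by the logarithmic-potential LLN $\tfrac{1}{n}\log|P_n(z)|\to\int\log|z-u|\,d\mu(u)$. Summability in $n$ for $k=o(n/\log n)$ delivers almost-sure convergence via Borel-Cantelli.

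The main obstacle in part (1) is the Carbery-Wright step itself: the natural formulation of the inequality is for polynomials in log-concave random variables, whereas $S_n$ is a polynomial of degree $k$ in the non-log-concave variables $Y_i=1/(z-\zeta_i)$, and converting to a polynomial in the log-concave $\zeta_i$ via multiplication by $P_n(z)$ inflates the total degree to $n-k$. Recovering the effective exponent $1/k$ requires either a variant tailored to multilinear elementary-symmetric polynomials, or a direct pushforward argument exploiting the explicit density of the $Y_i$ under the Mobius transformation. The companion upper bound $|S_n(z)|\le e^{\varepsilon n}$ is easier, following from Markov combined with concentration of $\sum_i\log|z-\xi_i|$.
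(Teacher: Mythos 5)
Your Part (2) is essentially the paper's argument: a second-moment computation on $S_n(z)$ using $\E Y_1 = g_\mu(z)\neq 0$ and $\E|Y_1|^2<\infty$, followed by Chebyshev, with the ratio $\Var(S_n)/|\E S_n|^2 = O(k^2/n)$ forcing $k=o(\sqrt n)$. The paper centers the variables first and sums over the ``order'' $r$ of the centered monomials, whereas you group $\E|S_n|^2$ by intersection size $\ell$ of the index sets. The two are equivalent, though note a small slip: the $\ell=0$ summand is $\binom{n}{k}\binom{n-k}{k}|g_\mu(z)|^{2k}$, which is strictly smaller than $|\E S_n|^2 = \binom{n}{k}^2|g_\mu(z)|^{2k}$. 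This is harmless (the deficit is negative, so $\Var(S_n)\le\sum_{\ell\ge 1}(\cdots)$ and your estimate still goes through), but as written the claim ``the $\ell=0$ summand equals $|\E S_n|^2$'' is false.

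For Part (1), you have correctly identified the central difficulty, but you stop precisely where the proof actually begins. You observe that Carbery--Wright wants a low-degree polynomial in log-concave coordinates, yet $S_n$ has degree $k$ only in the non-log-concave $Y_i = 1/(z-\xi_i)$, and that clearing denominators to get a polynomial in the $\zeta_i$ inflates the degree to $n-k$; you then say this ``requires either a variant tailored to multilinear elementary-symmetric polynomials, or a direct pushforward argument exploiting the explicit density of the $Y_i$.'' That second option is exactly what is needed, and it is not a heuristic but a concrete lemma that must be proved. The missing content is the paper's Lemma~\ref{lm:splitting}: the Doeblin condition on $\mu$ near $z_0$ pushes forward, through the local diffeomorphism $w\mapsto 1/(z-w)$ (whose Jacobian is bounded above and below on a small disk around $z_0$), to a Doeblin condition on the law of $Y_i$ itself. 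One then performs Nummelin's splitting on $Y_i$ directly, writing $Y_i \overset{d}{=} \ep_i Z_i + (1-\ep_i)W_i$ with $Z_i$ uniform on a disk $B(w_a,r_a)$. Conditioning on the Bernoulli variables and the $W_i$, $S_n$ becomes a genuine degree-$k$ polynomial in the uniform (hence log-concave) $Z_i$, whose top-degree part is the full elementary symmetric polynomial $e_k$ over the good indices; its second moment is $\gtrsim\binom{|\mathcal I|}{k}$ by orthogonality of the centered monomials, and Carbery--Wright with $q=2k$ then gives the exponent $1/k$ you want. Without this pushforward step your proof does not close. A secondary issue: your approach of bounding $P_n^{(k)}(z)$ and $|P_n(z)|$ separately needs $\tfrac1n\log|P_n(z)| \ge -\ep/3$ a.s., but $\int\log|z-u|\,d\mu(u)$ can be $-\infty$ for exceptional $z$; the paper sidesteps this by working with $S_n$ throughout and applying Jensen's formula on Möbius-transformed circles rather than a pointwise distributional-Laplacian test, which also makes the ``uniform integrability in $z$'' you invoke unnecessary.
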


 In the general setting, for {\it any} probability measures $\mu$, our next theorem shows that if we perturb it by adding a small dimension-nondegenerate noise, namely we sample the roots from the perturb measure $\mu_n:=(1-\alpha_n)\mu + \alpha_n\nu$ where $\alpha_n = o(1)$ and $\nu$ is dimension-nondegenerate, then the roots of the corresponding polynomial $P_n^{(k)}$ converges to $\mu$.  This result can be viewed as complimentary to \cite[Corollary 1.3]{ByunLeeReddy2022}

 \begin{theorem}\label{thm:perturb}
 	Let $\mu$ and $\nu$ be two probability measures on $\C$. Assume that $\nu$ is dimension-nondenegerate. Let $\alpha_n\in (0, 1)$ be a sequence of deterministic numbers satisfying $\lim \alpha_n = 0$ and $\lim \frac{\alpha_n}{\log n/n} =\infty$. Let $\xi_1, \dots, \xi_n$ be iid samples from  
 	$$\mu_n=(1-\alpha_n)\mu + \alpha_n\nu$$
 	and let 
 	$$P_n = \prod_{i=1}^{n} (z - \xi_i).$$
 	Let $k_n$ be a sequence of integers satisfying $k_n = o(\frac{n\alpha_n}{\log  n})$, then the sequence of measures $\mu_{P_n^{(k_n)}}$ converges to $\mu$ as $n\to \infty$ almost surely.
 \end{theorem}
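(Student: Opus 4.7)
The plan is to split the roots into a $\mu$-generated part and a $\nu$-generated part, and then to deploy the random $\nu$-part to drive an anti-concentration estimate of the same form as in Theorem~\ref{thm:main}\eqref{cond:nondeg}. Realize each root as $\xi_i = (1-\tau_i)\xi_i^{(\mu)} + \tau_i\,\xi_i^{(\nu)}$ with $\tau_i \sim \mathrm{Bernoulli}(\alpha_n)$, $\xi_i^{(\mu)}\sim\mu$, $\xi_i^{(\nu)}\sim\nu$, all independent, and set $A=\{i:\tau_i=1\}$, $m=|A|$. A Chernoff estimate gives $\P(m<\tfrac12\alpha_n n)\le e^{-c\alpha_n n}$, summable since $\alpha_n n/\log n\to\infty$; condition throughout on the complementary event, and factor $P_n = QR$ with $R=\prod_{i\in A}(z-\xi_i^{(\nu)})$ and $Q=\prod_{i\notin A}(z-\xi_i^{(\mu)})$.

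By the distributional identity recalled in the introduction, the convergence $\mu_{P_n^{(k_n)}}\to\mu$ reduces to (i) $\mu_{P_n}\to\mu$ almost surely, which follows from a triangular-array SLLN since $\mu_n\to\mu$ weakly, and (ii) $\frac{1}{n}\log|S_n(z)|\to 0$ for Lebesgue-a.e.\ $z$, with sufficient local $L^1$ domination. The upper bound side of (ii) is classical Cauchy-formula bookkeeping. For the lower bound, condition further on $(\xi_i^{(\mu)})_{i\notin A}$; using $e_k(X,Y)=\sum_{j=0}^k e_j(X)e_{k-j}(Y)$ gives
\[
S_n(z)=\sum_{j=0}^{k} a_{k-j}\, e_j(X_A), \qquad X_i=(z-\xi_i^{(\nu)})^{-1},\quad a_l = e_l(Y),
\]
which, since $a_0=1$, is a symmetric polynomial of degree exactly $k$ in the $m$ i.i.d.\ variables $(X_i)_{i\in A}$ whose common law is the pushforward of $\nu$ by a Möbius map and is therefore itself dimension-nondegenerate for Lebesgue-a.e.\ $z\in\C$.

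This object is precisely of the kind controlled by the anti-concentration machinery behind Theorem~\ref{thm:main}\eqref{cond:nondeg}: it is $e_k$ of $m$ i.i.d.\ dimension-nondegenerate variables, up to lower-order symmetric perturbations with coefficients fixed by the conditioning. The scaling hypothesis $k_n=o(\alpha_n n/\log n)$ becomes, on the event $m\ge\tfrac12\alpha_n n$, exactly $k_n=o(m/\log m)$, i.e.\ the admissible regime there. Running the same machinery with $m$ in place of $n$ yields the conditional tail estimate
\[
\P\!\left(|S_n(z)|\le e^{-\ep n}\ \big|\ \text{conditioning}\right)\le e^{-c\,\alpha_n n/\log n},
\]
which is summable in $n$. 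Borel--Cantelli then gives $\liminf_n \frac{1}{n}\log|S_n(z)| \ge -\ep$ almost surely for Leb-a.e.\ $z$; letting $\ep\downarrow 0$ and combining with the upper bound concludes via the Laplacian identity.

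The main obstacle is that $S_n$ is not pure $e_k(X_A)$ but carries arbitrary lower-order coefficients $a_j$ dictated by the $\mu$-roots and by $z$, so the anti-concentration step must be uniform over $(a_j)_{j\ge 1}$. This is exactly what the peeling/conditioning scheme inside the proof of Theorem~\ref{thm:main}\eqref{cond:nondeg} delivers: freezing all but one coordinate at a time reduces the small-ball event for $S_n$ to that of a single Möbius pushforward of $\nu$, and the leftover ``derivative'' coefficient has the same algebraic form but with $k-1$ in place of $k$, so the recursion closes; the identity $a_0=1$ is what prevents it from stalling. The only residual care is (a) excising the Lebesgue-null set on which $z$ sits too close to some $\xi_i$ (standard truncation of $\log|S_n|$), and (b) combining the conditional bound with the randomness of $|A|$ itself, both of which are harmless given the exponential tails already obtained.
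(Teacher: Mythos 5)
Your probabilistic core matches the paper's exactly: Bernoulli$(\alpha_n)$ splitting of the roots into a $\mu$-part and a $\nu$-part, a Chernoff bound showing $|A|\gtrsim\alpha_n n$ with summable failure probability, conditioning on the indicators and the $\mu$-roots, the algebraic observation that $S_n$ is then a degree-$k$ symmetric polynomial in the $\nu$-roots with unit leading coefficient (your $a_0=1$ is the paper's $\alpha_I=1$ for $|I|=k$), the fact that the common law of $X_i=(z-\xi_i^{(\nu)})^{-1}$ is a M\"obius pushforward of $\nu$ hence dimension-nondegenerate, and the rerun of the anti-concentration recursion from Lemma~\ref{lm:lower} with $|A|$ in place of $n$ to get a summable tail under $k_n=o(\alpha_n n/\log n)$. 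Where you diverge is the complex-analytic reduction. You go through the Laplacian identity and rely on ``sufficient local $L^1$ domination'' plus a Fubini/excision step to upgrade pointwise anti-concentration at Lebesgue-a.e.\ $z$ to distributional convergence of $\Delta\log|S_n|/n$; that route is serviceable but you leave the $L^1_{\mathrm{loc}}$ step as a black box, and it is precisely where the a.s.\ (as opposed to in-probability) statement is delicate. The paper instead reuses the M\"obius/Jensen framework of \cite{angst2024almost}: Proposition~2.3 bounds the difference of $\log^-$-potentials by $\log\|S_n\|_{u^{-1}(\mathbf S^1)}-\log|S_n|(u^{-1}(0))$, Lemma~\ref{lm:logpotential} converts potential domination into measure domination, and the two tail estimates close the loop cleanly for almost every $u\in\mathcal M$. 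This buys one extra thing you do not address: $\int\log^-|u|\,d\nu$ may be infinite, which would ruin a naive comparison of potentials; the paper sidesteps it by proving $\widehat\mu_\infty\le\mu+\ep\nu$ for every $\ep>0$ and then letting $\ep\downarrow 0$. Your route via $\mu_{P_n}\to\mu$ and the Laplacian does dodge that specific issue (the $\nu$-contribution to $\mu_{P_n}$ is $O(\alpha_n)=o(1)$), but you would need to spell out the $L^1_{\mathrm{loc}}$ domination to make the a.s.\ conclusion airtight; as written that is the one real soft spot.
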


 To interpret the theorem, in the perturbed setting, we see that most roots of $P_n$ are sampled from $\mu$, while only a vanishing fraction $\alpha_n n$ come from a dimension-nondegenerate component $\nu$. 
 By the Gauss--Lucas theorem, the zeros of $P_n^{(k_n)}$ are influenced by the logarithmic forces generated by all roots of $P_n$, including those coming from $\nu$. 
 Nevertheless, Theorem~\ref{thm:perturb} shows that these additional roots do not alter the limiting behavior: despite the pulling effect induced by $\nu$, the zeros of $P_n^{(k_n)}$ still converge to $\mu$ (see Figure \ref{fig:comparison}). 
 This robustness strongly suggests that the same convergence should hold even in the unperturbed case $\alpha_n\equiv 0$, where all roots are drawn from $\mu$.
 \begin{figure}[h!]
 	\centering
 	\includegraphics[width=\textwidth]{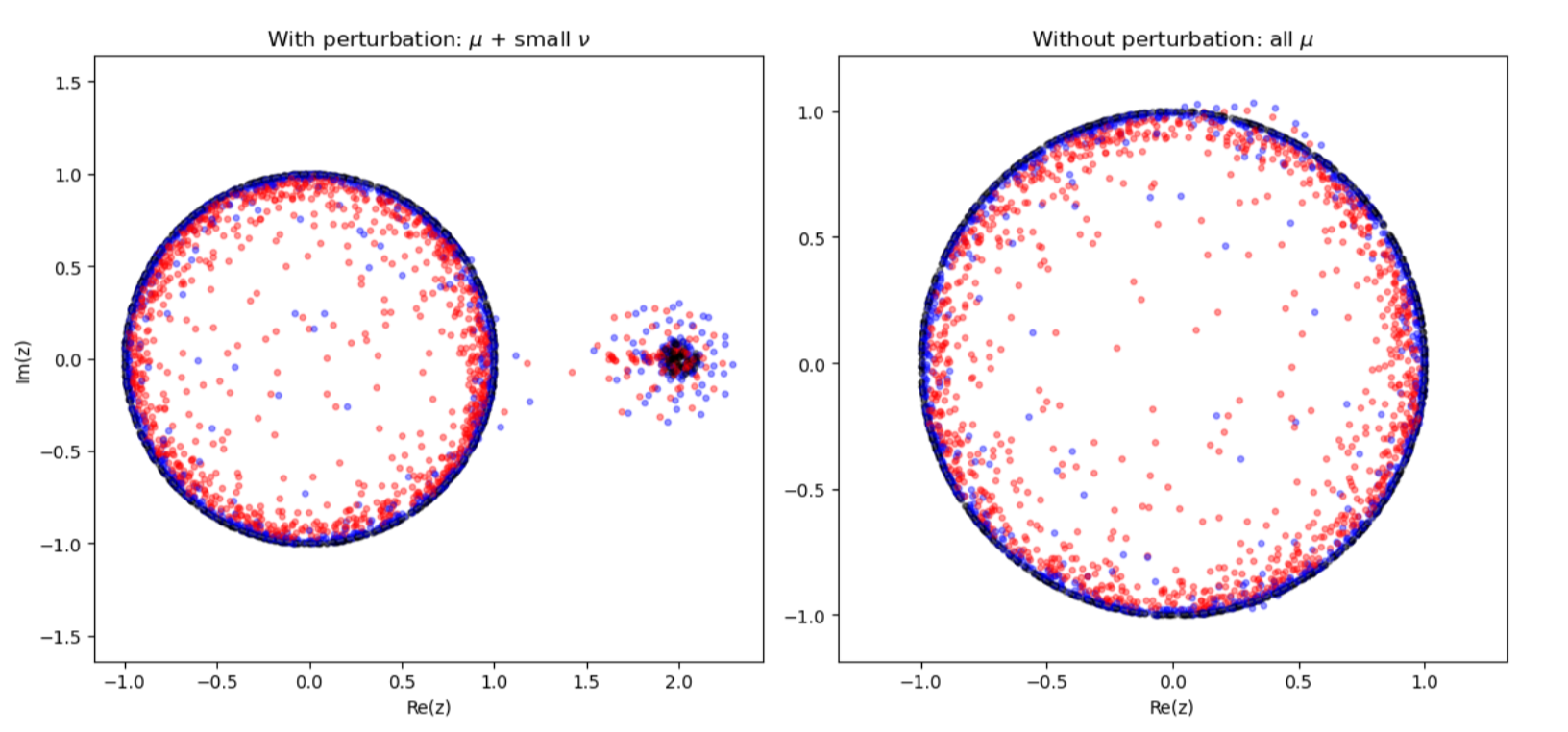}
 	\caption{Zeros of $P_n$ (black), $P_n'$ (blue), and $P_n^{(5)}$ (red) for ten independent samples.
 		Left: $100$ roots are drawn from $\mu$ (uniform on the unit circle) and $10$ from $\nu$ (uniform on a small disk of radius $0.1$ centered at $3$).
 		Right: all $110$ roots are drawn from $\mu$.
 	}
 	\label{fig:comparison}
 \end{figure}

 \section{Proof of Theorem \ref{thm:main}}
 To simplify notations, we will write $k=k_n$.
 \subsection{Discrete measures} Firstly, we consider the case that $\mu$ is supported on a set $\{z_1, z_2, \dots\}$, we note that a.s., the polynomial $P_n$ has root $z_i$ with multiplicity $N_i$ with $N_i/n\to \mu(z_i)$. Taking derivatives $k_n=o(n)$ times, $z_i$ remains a root with multiplicity $N_i-o(n)=(1+o(1)) \mu(z_i)n$. Therefore, a.s.,
 $$\liminf_{n\to\infty}\mu_{P_n^{(k)}}(z_i)\ge \mu(z_i).$$
 Since the sum over $i$ of the left-hand side is at most 1, which is the sum of the right-hand side, it must be the case that equality holds for all $i$ which in turns implies that 
 \begin{eqnarray*}
 	1-\liminf_{n\to\infty}\mu_{P_n^{(k)}}(z_i)&=& \limsup_{n\to\infty} \sum_{j\neq i}\mu_{P_n^{(k)}}(z_j)= 1-\mu(z_i)=\sum_{j\neq i}\mu(z_j)\\
 	&=&\sum_{j\neq i} \liminf \mu_{P_n^{(k)}}(j)\le \liminf \sum_{j\neq i} \mu_{P_n^{(k)}}(j).
 \end{eqnarray*}
 Therefore, 
 $\limsup_{n\to\infty} \sum_{j\neq i}\mu_{P_n^{(k)}}(z_j)= \liminf \sum_{j\neq i} \mu_{P_n^{(k)}}(j)= \lim \sum_{j\neq i} \mu_{P_n^{(k)}}(j)$
 and so
 $$\lim_{n\to\infty} \mu_{P_n^{(k)}}(z_i)=\liminf _{n\to\infty}\mu_{P_n^{(k)}}(z_i)= \mu(z_i)$$
 We conclude that $\mu_{P_n^{(k)}}$ converges to $\mu$ almost surely. 
 Therefore, for the rest of this section, we assume that $\mu$ is not discrete.

 \subsection{Dimension-nondegenerate measures}\label{sec:proof:mu}
Following the notations and proof strategy in \cite{angst2024almost}, we denote by 
 \[
 \mathcal{M} = \left\{ z \mapsto \frac{\alpha z + \beta}{\gamma z + \delta} 
 \,\middle|\, \alpha, \beta, \gamma, \delta \in \mathbb{C},\, \alpha\delta - \beta\gamma \neq 0 \right\}
 \]
 the set of invertible Möbius transformations of $\mathbb{C}$ and endow it with the measure $\lambda_{\mathcal{M}}$ inherited from 
 the Lebesgue measure $\lambda_{\mathbb{C}} \otimes \lambda_{\mathbb{C}} \otimes 
 \lambda_{\mathbb{C}} \otimes \lambda_{\mathbb{C}}$ on $\mathbb{C}^4$.
 
 Let $u$ be in $\mathcal M$. The classical Jensen's formula when applied to the meromorphic function $S_n\circ u^{-1}$, where we recall that $S_n =	S_{k, n}(z):= \frac{P^{(k)}_n(z)}{k!P_n(z)}$, yields the following inequality.
 \begin{proposition}\cite[Proposition 2.2]{angst2024almost}
 	For any Möbius transformation $u \in \mathcal{M}$ such that $u^{-1}(0)$ is not a zero or a pole of $S_n$, we have almost surely,
 	\begin{equation}\label{eq:Jensen:n}
 		\sum_{P_n^{(k)}(\rho)=0} \log^{-} |u(\rho)| - 	\sum_{P_n(\zeta)=0} \log^{-} |u(\zeta)|\le \log||S_n||_{u^{-1}(\mathbf S^1)} - \log |S_n|(u^{-1}(0))
 	\end{equation}
 	where $\mathbf S^1$ is the unit circle in $\C$ and $||S_n||_{u^{-1}(\mathbf S^1)}$ is the maximum value of $S_n$ over $u^{-1}(\mathbf S^1)$. Here, we note that the first sum runs over all zeros of $S_n$ while the second sum runs over all poles, both of which run with multiplicities.
 \end{proposition}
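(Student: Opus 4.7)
The plan is to apply the classical Jensen formula to the auxiliary meromorphic function $f := S_n \circ u^{-1}$, exploiting the fact that $u$ is a Möbius bijection of $\widehat{\mathbb{C}}$ that sends zeros and poles of $S_n$ to zeros and poles of $f$ with the same multiplicity. Since $\mu$ is assumed non-discrete, almost surely the roots $\xi_1,\dots,\xi_n$ of $P_n$ are pairwise distinct; hence the poles of $S_n$ are precisely the $\xi_i$ (simple) and the zeros of $S_n$ are precisely the zeros of $P_n^{(k)}$ counted with multiplicity. The hypothesis that $u^{-1}(0)$ is neither a zero nor a pole of $S_n$ ensures $f(0)\in\mathbb{C}\setminus\{0\}$, so $\log|f(0)|$ is finite.

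Next I would verify that $f$ is meromorphic on a neighborhood of the closed unit disk: $u^{-1}$ is Möbius, hence holomorphic on $\widehat{\mathbb{C}}$ outside a single pole, and $S_n$ is rational, so $f$ is a rational function whose singularities in $\{|w|\le 1\}$ are the $u$-images of zeros and poles of $S_n$ falling inside the closed unit disk. For $\lambda_{\mathcal M}$-almost every $u$, the curve $u^{-1}(\mathbf{S}^1)$ avoids these finitely many singular points, so $\log|f|$ is integrable on $\mathbf{S}^1$. The Jensen formula then yields
\[
\log|f(0)| \;=\; \sum_{f(w)=0,\ |w|<1} \log|w| \;-\; \sum_{f(w)=\infty,\ |w|<1} \log|w| \;+\; \frac{1}{2\pi}\int_0^{2\pi} \log|f(e^{i\theta})|\, d\theta,
\]
and pulling back by $u$, the first sum becomes $\sum \log|u(\rho)|$ over zeros $\rho$ of $P_n^{(k)}$ satisfying $|u(\rho)|<1$, and similarly the second sum becomes the analogous sum over zeros $\zeta$ of $P_n$ with $|u(\zeta)|<1$.

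Finally, using the identity $-\log|w| = \log^-|w|$ for $|w|<1$ together with $\log^-|w|=0$ for $|w|\ge 1$, both restricted sums can be harmlessly extended to range over \emph{all} zeros of $P_n^{(k)}$ and all zeros of $P_n$, respectively. Bounding the boundary integral by the supremum, $\frac{1}{2\pi}\int_0^{2\pi}\log|f(e^{i\theta})|\,d\theta \leq \log\|S_n\|_{u^{-1}(\mathbf{S}^1)}$, and rearranging produces exactly \eqref{eq:Jensen:n}. The main points of care are bookkeeping multiplicities in the (measure-zero) degenerate case where a zero of $P_n^{(k)}$ coincides with a root of $P_n$, and discarding the negligible set of Möbius parameters for which $u^{-1}(\mathbf{S}^1)$ meets a singularity of $S_n$; neither affects the claimed almost-sure inequality.
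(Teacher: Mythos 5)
Your approach---applying the classical Jensen formula to $f = S_n\circ u^{-1}$ and pulling back the zero/pole sums via $u$---is exactly the route the paper takes (the paper itself only cites \cite[Proposition 2.2]{angst2024almost} for this, remarking that it follows by applying Jensen to $S_n\circ u^{-1}$). Your bookkeeping with $\log^-$ and the bound $\tfrac{1}{2\pi}\int_0^{2\pi}\log|f(e^{i\theta})|\,d\theta \le \log\|S_n\|_{u^{-1}(\mathbf S^1)}$ is correct and recovers the inequality after rearranging.

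One small but real omission: on the Riemann sphere $\widehat{\C}$ the rational function $S_n = P_n^{(k)}/(k!P_n)$ has, in addition to the zeros of $P_n^{(k)}$, a zero of order $k$ at $\infty$ (since $\deg P_n^{(k)} = n-k < n = \deg P_n$). Consequently, if the pole of $u^{-1}$ lies inside the unit disk, i.e.\ $|u(\infty)|<1$, then $f=S_n\circ u^{-1}$ has an extra zero of multiplicity $k$ there, which Jensen's formula also counts. Your intermediate equality ``the first sum becomes $\sum\log|u(\rho)|$ over zeros $\rho$ of $P_n^{(k)}$'' is therefore not quite an equality: it is missing the term $k\log|u(\infty)|$. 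Fortunately, after moving things to the other side this contributes $-k\log^-|u(\infty)|\le 0$ to the right-hand side, so the claimed inequality only becomes easier. The proof stands, but you should state explicitly that the sphere-level zero at $\infty$ has been dropped because its contribution has the favorable sign; as written, the argument silently replaces an equality by an inequality without flagging it.

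A minor terminological note: you derive the bound ``for $\lambda_{\mathcal M}$-almost every $u$,'' but the statement holds for every $u$ with $u^{-1}(0)$ not a zero or pole of $S_n$: even if $u^{-1}(\mathbf S^1)$ meets a zero of $S_n$, $\log|f|$ remains integrable on $\mathbf S^1$ (the singularity is logarithmic), and if it meets a pole the right-hand side is $+\infty$ so the inequality is vacuous. Restricting to a.e.\ $u$ is sufficient for the application but not necessary here.
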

 
 Since the space of probability measures on $\C$ is a subset of the space $\mathcal P(\hat \C)$ of probability measures on the Riemann sphere $\hat \C = \C\cup \{\infty\}$ which is compact in the weak topology, it suffices to show that for any cluster value $\widehat{\mu}_{\infty}$ of the sequence $\mu_{P_n^{(k)}}$ in $ \mathcal P(\hat \C)$, we have $\widehat{\mu}_{\infty} = \mu$. Since both measures are probability measures on $\widehat{\mathbb C}$, 
 it suffices to show the upper bound $\widehat{\mu}_{\infty} \le \mu$, 
 which follows (by Lemma \ref{lm:logpotential} below) from proving that, for almost all Möbius transformations 
 $u \in \mathcal M$, the logarithmic potential of $\widehat{\mu}_{\infty}$ is dominated by that of $\mu$.
 \begin{lemma}\cite[Lemma 2.7]{angst2024almost}\label{lm:logpotential}
 	Let $\widehat m_1$ and $\widehat m_2$ be two finite measures on 
 	the Riemann sphere $\widehat{\mathbb C}$ such that
 	\[
 	\int_{\widehat{\mathbb C}} \log^{-}|u|\, d\widehat m_1
 	\le
 	\int_{\widehat{\mathbb C}} \log^{-}|u|\, d\widehat m_2
 	\]
 	for $\lambda_{\mathcal M}$–almost every Möbius transformation 
 	$u \in \mathcal M$. 
 	Then $\widehat m_1 \le \widehat m_2$ on $\widehat{\mathbb C}$. In particular, if $\widehat m_1$ and $ \widehat m_2$ are probability measures then they are equal.
 \end{lemma}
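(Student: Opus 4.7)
My plan is a contradiction argument. Set $\nu := \widehat m_2 - \widehat m_1$, a finite signed measure on $\hat{\mathbb C}$, so the hypothesis reads $\int \log^{-}|u|\,d\nu \ge 0$ for $\lambda_{\mathcal M}$-a.e.\ $u \in \mathcal M$, and we wish to deduce $\nu \ge 0$. For every Möbius $u$, the function $z \mapsto \log^{-}|u(z)|$ is supported in the generalized spherical disk $D_u := u^{-1}(\mathbb D)$ and coincides there with the Green function $G_{D_u}(z, z_u)$ of $D_u$ with pole $z_u := u^{-1}(0)$. The assignment $u \mapsto (D_u, z_u)$ is a submersion onto the space of pointed spherical disks, with one-dimensional fiber (the rotations of $\mathbb D$), so open sets in the base correspond to $\lambda_{\mathcal M}$-positive sets of $\mathcal M$, and the hypothesis becomes: for a.e.\ pair $(D, z_0)$,
\[
\int_D G_D(\zeta, z_0)\,d\nu(\zeta) \;\ge\; 0.
\]

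Now suppose for contradiction that the Jordan negative part $\nu^-$ is nontrivial. Because $\nu^+$ and $\nu^-$ are mutually singular, the Lebesgue--Besicovitch differentiation theorem for Radon measures furnishes a point $z_0 \in \mathrm{supp}(\nu^-)$ at which $\nu^+(D(z_0, r))/\nu^-(D(z_0, r)) \to 0$ as $r \to 0$. Fix $r$ small enough that this ratio is as tiny as we wish on $D_* := D(z_0, r)$. In the Green-potential decomposition $\int_{D_*} G_{D_*}(w, \zeta)\,d\nu(\zeta)$ for $w \in D_*$, the $\nu^+$-contribution is uniformly bounded above by a constant times $\nu^+(D_*)$, while on any compact subset of $D_*$ bounded away from $\partial D_*$ the $\nu^-$-contribution is bounded below by a positive constant times $\nu^-(D_*)$. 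For $r$ small enough this forces $w \mapsto \int_{D_*} G_{D_*}(w, \zeta)\,d\nu(\zeta)$ to be strictly negative on a nonempty open subset $V \subset D_*$.

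The final step transfers the negativity back to $\mathcal M$. By continuity of $(D, z') \mapsto \int_D G_D(\cdot, z')\,d\nu$ under smooth perturbations of the pointed disk, the strict negativity at some $(D_*, w_0)$ with $w_0 \in V$ extends to a full open neighborhood of pointed disks; pulling it back through the submersion $u \mapsto (D_u, z_u)$ produces an open subset of $\mathcal M$ of strictly positive $\lambda_{\mathcal M}$-measure on which $\int \log^{-}|u|\,d\nu < 0$, contradicting the hypothesis.

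The main obstacle I anticipate is the robustness of the Green-potential estimate: one must ensure that strict negativity survives not only variation of the pole inside a fixed disk, but also genuine perturbations of the disk itself, so that the offending set of Möbius transformations is genuinely $6$-real-dimensional rather than confined to a lower-dimensional slice. This requires a quantitative continuity statement for $(D, z') \mapsto G_D(\cdot, z')$ uniform on compact subsets away from the pole. Once that uniform estimate is in place, the remainder of the argument is soft, relying only on continuity and the submersion property of $u \mapsto (D_u, z_u)$.
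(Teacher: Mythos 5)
Your high-level strategy—pass to the signed measure $\nu=\widehat m_2-\widehat m_1$, locate a point of $\nu^-$-concentration via Lebesgue--Besicovitch differentiation, and derive a contradiction from the Green potential of $\nu$ on a small disk—is plausible and, as far as I can tell, not the route taken in \cite{angst2024almost} (whose argument is, in spirit, a reconstruction/``tomography'' argument: one averages $\log^-|u|$ over $u\in\mathcal M$ against test densities, uses Fubini to produce a rich family of nonnegative continuous kernels on $\widehat{\mathbb C}$, and reads off the measure inequality by approximation). But your write-up has three concrete gaps, two of which you do not acknowledge.

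First, the assertion that the $\nu^+$-contribution $\int_{D_*}G_{D_*}(w,\zeta)\,d\nu^+(\zeta)$ is \emph{pointwise} bounded above by $C\nu^+(D_*)$ is false: $G_{D_*}(w,\zeta)\sim\log\frac{1}{|w-\zeta|}$ near the diagonal, so the Green potential of $\nu^+$ can be $+\infty$ at points $w$ even when $\nu^+(D_*)$ is tiny (for instance if $\nu^+$ has mass accumulating at a non-atom point). One can only get an \emph{averaged} bound, via Fubini and the finiteness of $\sup_\zeta\int_{D_*}G_{D_*}(w,\zeta)\,dm(w)$, or equivalently via the super-mean-value property of superharmonic functions, and then conclude a pointwise bound only off a small exceptional set. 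Second, the claimed lower bound ``bounded below by a positive constant times $\nu^-(D_*)$'' on compacts away from $\partial D_*$ is also false as stated: $G_{D_*}(w,\cdot)$ vanishes on $\partial D_*$, so if $\nu^-$ concentrates near the boundary the Green potential of $\nu^-$ can be small. You only get a lower bound in terms of $\nu^-$ of a \emph{strictly smaller} concentric disk, say $\nu^-(D(z_0,r/8))$. Comparing this to $\nu^-(D(z_0,r))$ (which is what the differentiation theorem controls) requires an extra argument, since a general Radon measure has no doubling at a.e.\ point; one must invoke something like the fact that $\underline{\dim}_{\nu^-}\le 2$ holds $\nu^-$-a.e.\ (a mass-distribution-principle consequence) to produce a sequence of radii along which $\nu^-$ does not decay too fast between nested scales. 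Third, the continuity of $(D,z')\mapsto\int_D G_D(\cdot,z')\,d\nu$ that you invoke to spread the negativity to a $\lambda_{\mathcal M}$-positive set is genuinely false: the Green potential of $\nu^+$ is lower semicontinuous but not continuous, so $U=U^+-U^-$ is a difference of LSC functions and the negativity at a single $(D_*,w_0)$ need not persist in a neighborhood. You flag this as ``the main obstacle'' but offer no fix; the correct repair is again an averaging/Fubini argument over an open set of pointed disks (so that the singular kernel is replaced by a bounded continuous one), combined with the observation that the fiber of $u\mapsto(D_u,z_u)$ is compact so that positive measure in the base pulls back to positive $\lambda_{\mathcal M}$-measure. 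Until these three points are addressed, the argument does not close.
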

 
 Therefore, we are left to prove that almost surely with respect to $\P$, for $\lambda_{\mathcal{M}}$-almost every $u \in \mathcal{M}$, for all $\ep>0$,
 \begin{equation}
 	\int_{\widehat{\C}} \log^- |u|\, d\widehat{\mu}_\infty(u) \le \int_{\C} \log^- |u|\, d\mu(u) + \ep. \label{eq:mun:mu}
 \end{equation}
 Since $\widehat{\mu}_\infty$ is a cluster point, there exists a subsequence $\mu_{P_n^{(k)}}$ such that 
 $\widehat{\mu}_\infty = \lim_{k \to +\infty} \mu_{P_n^{(k)}}$.  
 We define the truncated map $x\mapsto \log^-_M(x) := \log^-(x) \wedge M$ which is continuous on $\widehat{\mathbb{C}}$, and therefore
 \[
 \int_{\widehat{\mathbb{C}}} \log^-_M |u| \, d\widehat{\mu}_\infty (u) 
 = \lim_{n \to \infty} \int_{\mathbb{C}} \log^-_M |u| \, d\mu_{P_n^{(k)}}(u)
 \le \limsup_{n \to \infty} \int_{\mathbb{C}} \log^- |u| \, d\mu_{P_n^{(k)}}(u).
 \]
 Sending $M \to \infty$, we deduce that
 \begin{equation}
 	\int_{\widehat{\mathbb{C}}} \log^- |u_i| \, d\widehat{\mu}_\infty
 	\le \limsup_{n \to \infty} \int_{\mathbb{C}} \log^- |u_i| \, d\mu_{P_n^{(k)}}(u). \label{eq:limsup}
 \end{equation}
 Moreover, by the Law of Large Numbers, as $\xi_i\sim \mu$ are iid,
 \begin{equation}
 	\int_{{\C}} \log^- |u|\, d\mu_n(u) = \frac1n\sum_{i=1}^{n} \log^-|u(\xi_i)|\to \int_{\C} \log^- |u|\, d\mu(u) \text{ almost surely}.  \notag
 \end{equation}
 Thus, \eqref{eq:mun:mu} is reduced to proving
 $$ \sum_{P_n^{(k)}(\rho)=0} \log^{-} |u(\rho)| - 	\sum_{P_n(\zeta)=0} \log^{-} |u(\zeta)|\le   \ep.$$
 Since for almost all Möbius transformations $u$, we have $\P$-a.s., $u^{-1}(0)$ is neither a pole or zero of $S_n$, we can apply \eqref{eq:Jensen:n} to further reduce the above to
 \begin{equation}\label{eq:S:u}
 	\limsup_{n\to \infty} \frac{\log||S_n||_{u^{-1}(\mathbf S^1)} - \log |S_n|(u^{-1}(0))}{n}\le \ep.
 \end{equation}
 
 This follows from the next two lemmas: the first proves a rather straightforward upper bound on $S_n$ and the second establishes our key anti-concentration result.
 \begin{lemma} \label{lm:upper}
 	For all probability measures $\mu$, for all $\ep>0$, for almost all $u\in \mathcal M$, it holds that $\P$-a.s., as $n \to +\infty$,
 	$$\log \|S_n\|_{u^{-1}(\mathbf S^1)} \le \ep n.$$
 \end{lemma}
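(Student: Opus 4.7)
The argument combines an elementary algebraic bound with a Fubini--Borel--Cantelli control on how close the random roots come to the circle $C_u := u^{-1}(\mathbf S^1)$. Setting $Y_i(z) = 1/(z-\xi_i)$, the identity $S_n = e_k(Y_1,\ldots,Y_n)$ and the trivial estimate $|e_k(Y)| \le \binom{n}{k}\max_i |Y_i|^k$ give $|S_n(z)| \le \binom{n}{k}\max_i 1/|z-\xi_i|^k$. Taking the supremum over $z \in C_u$ and bounding $|z-\xi_i| \ge d_i := \mathrm{dist}(\xi_i, C_u)$ yields
\[
\log\|S_n\|_{C_u} \;\le\; \log\binom{n}{k} + k\log\!\frac{1}{\min_i d_i} \;=\; O(k\log n) + k\log\!\frac{1}{\min_i d_i}.
\]
Because $k_n = o(n/\log n)$, the first term is $o(n)$, so it suffices to show that, for $\lambda_\mathcal{M}$-almost every $u$, one has $\min_i d_i \ge n^{-C(u)}$ eventually $\mathbb P$-a.s.\ for some constant $C(u)$.

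To produce such a polynomial lower bound, I would first establish that for $\lambda_\mathcal{M}$-a.e.\ $u$ there exist $\beta \in (0,1)$ and $K_u < \infty$ such that the tubular neighborhood $A_r^u := \{w \in \C : \mathrm{dist}(w, C_u) < r\}$ satisfies $\mu(A_r^u) \le K_u r^\beta$ for all small $r$. This follows from a Fubini computation on $\mathcal M\times\C$: for each fixed $z \in \C$, the set $\{u : \mathrm{dist}(z, C_u) < r\}$ is, after a standard change of variable, an $O(r)$-thick tube around the real codimension-one hypersurface $\{u : |u(z)|=1\}$ in the $8$-real-dimensional parameter space, and hence has $\lambda_\mathcal{M}$-measure $O(r)$ on any bounded reference domain. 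Integrating against $d\mu(z)$ gives $\int_\mathcal{M} \mu(A_r^u)\,d\lambda(u) = O(r)$, and Markov's inequality together with Borel--Cantelli along $r = 2^{-j}$ then produces the pointwise bound $\mu(A_r^u) = O(r^\beta)$ for $\lambda_\mathcal{M}$-a.e.\ $u$. Coupled with the union bound $\mathbb P(\min_i d_i < r) \le n\,\mu(A_r^u) \le K_u n r^\beta$, a second Borel--Cantelli step in $n$ (with $r_n = n^{-C/\beta}$ for $C>1$) delivers $\min_i d_i \ge n^{-C/\beta}$ eventually a.s., whence $\log\|S_n\|_{C_u} = O(k\log n) = o(n)$, as required.

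The main technical obstacle I expect is rigorously executing the Fubini estimate $\lambda_\mathcal{M}(\{u : \mathrm{dist}(z, C_u) < r\}) = O(r)$ with an implicit constant that is uniform enough in $z$ to survive integration against $d\mu$. This requires bookkeeping of how the derivative $|u'(z)|$ enters the conversion between $\bigl||u(z)|-1\bigr| < r'$ and $\mathrm{dist}(z, C_u) < r$, together with a standard compactness or truncation step if $\mu$ has unbounded support, and a localization on a bounded reference subset of $\mathcal M$ to cope with the $\sigma$-finiteness of $\lambda_\mathcal{M}$. Once the geometric estimate is set up on such a reference domain, the remainder of the proof is a routine cascade of Markov and Borel--Cantelli applications.
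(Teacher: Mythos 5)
Your proposal is correct, but it takes a genuinely different route from the paper after the common starting point. Both arguments begin with the elementary symmetric-polynomial bound $|S_n(z)| \le \binom{n}{k}\sup_{i_1<\cdots<i_k}\prod_j 1/|z-\xi_{i_j}|$ and replace $|z-\xi_i|$ by $d_i = \operatorname{dist}(\xi_i, C_u)$, so everything reduces to controlling the $k$ smallest $d_i$'s. From there the paper inserts the algebraic inequality
\[
\prod_{j=1}^k \frac{1}{d_{i_j}} \;\le\; \Bigl(\sum_{i=1}^n d_i^{-1/2}\Bigr)^{2k},
\]
and controls the right side in one stroke by the strong law of large numbers: $\frac1n\sum_i d_i^{-1/2}\to \E[\,\operatorname{dist}(\xi,C_u)^{-1/2}]$ a.s., and this expectation is finite for $\lambda_{\mathcal M}$-a.e.\ $u$ by exactly the kind of Fubini over $\mathcal M$ that you invoke (the $1/2$-power singularity is integrable over the parameter space; this is inherited from the framework of \cite{angst2024almost}). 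You instead use the cruder bound $\prod 1/d_{i_j}\le (\min_i d_i)^{-k}$, derive a Frostman-type estimate $\mu(A_r^u)=O(r^\beta)$ for a.e.\ $u$ via Fubini, Markov, and a dyadic Borel--Cantelli, and then feed it through a union bound and a second Borel--Cantelli to get $\min_i d_i \ge n^{-O(1)}$ eventually a.s. Both approaches land on $\log\|S_n\|_{C_u}=O_u(k\log n)=o(n)$.

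What each route buys: the paper's version is shorter and avoids the Frostman estimate entirely, needing only one integrability check and one application of the LLN after Fubini; the summability step already averages over all $n$ roots, so no union bound over $i$ is required, and the uniformity-in-$z$ issue you flag never surfaces in the same form (one only needs, for each fixed $\xi$, that $\int \operatorname{dist}(\xi,C_u)^{-1/2}\,d\lambda_{\mathcal M}(u)$ is locally integrable, which is a per-point statement). Your version is more explicit about the geometric mechanism — a polynomial lower bound on the distance from the nearest root to $C_u$ — and is self-contained once the tube estimate is set up, but it costs two additional Borel--Cantelli passes and forces the truncation/localization bookkeeping you already identify as the main technical burden. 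One small remark on that bookkeeping: the truncation you propose is sound because any root $\xi_i$ far from the (bounded) circle $C_u$ automatically has $d_i$ bounded below by a constant and so cannot realize $\min_i d_i$; you only need the tube estimate for $z$ in a fixed bounded neighborhood of $C_u$, which removes the uniformity worry. Also, as your own computation shows, $\beta$ can be taken to be a fixed constant (e.g.\ $1/2$) rather than $u$-dependent, so the exponent $C(u)$ in $\min_i d_i \ge n^{-C(u)}$ is in fact uniform.
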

 
 \begin{lemma} [Anti-concentration bound]\label{lm:lower}
 	Under the hypothesis of Theorem \ref{thm:main} \eqref{cond:nondeg}, for all $a\in \C$ and $\ep>0$, it holds that $\P$-a.s., as $n \to +\infty$,
 	$$\log |S_n(a)| \ge -\ep n.$$
 \end{lemma}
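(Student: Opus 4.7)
The plan is to exploit the identity $S_n(a) = e_k(Y_1,\dots,Y_n)$, with $Y_i := 1/(a-\xi_i)$, and prove a quantitative anti-concentration bound of the form
\[
\P\bigl(|S_n(a)| \le e^{-\varepsilon n}\bigr) \le k\,C\,e^{-c\varepsilon n/k}
\]
by an inductive ``peeling'' over the $k$ factors. Once such a bound is in hand, the assumption $k=o(n/\log n)$ makes the right-hand side decay faster than any polynomial in $n$, so Borel--Cantelli yields $\log|S_n(a)| \ge -\varepsilon n$ eventually almost surely, and letting $\varepsilon$ range over a countable sequence tending to $0$ completes the argument.

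The first step is to distill the dimension-nondegeneracy hypothesis into a uniform Frostman estimate. Starting from $\nu\le\mu$ with $\underline{\dim}_\nu(x)>0$ on a set $E$ of positive $\nu$-mass, a standard Egorov argument, combined with excluding a small disk around the point $a$ and restricting to a bounded region, yields constants $s, C, r_0, p>0$ and a Borel set $E^*\subset E$ with $\mu(E^*)\ge p$, such that
\[
\mu\bigl(E^*\cap D(y,r)\bigr)\le C r^s \quad\text{for every } y\in\C,\ r\le r_0,
\]
and such that the inversion $\xi\mapsto 1/(a-\xi)$ is bi-Lipschitz on $E^*$. Transferring the Frostman bound through this map, the conditional law of $Y_i$ given $\xi_i\in E^*$ satisfies $\P(|Y_i-y'|\le r \mid \xi_i\in E^*)\le C' r^s$ uniformly in $y'\in\C$ for all small $r$.

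Call $i$ \emph{good} if $\xi_i\in E^*$ and write $I=\{i:\xi_i\in E^*\}$, so that $|I|\ge pn/2$ except on an event of probability $e^{-\Omega(n)}$ by Chernoff. For $n$ large enough, $k\le pn/2$; fix any $k$ good indices $i_1,\dots,i_k\in I$ and set, for $0\le j\le k$,
\[
A_j := e_{k-j}\bigl(Y_\ell : \ell\notin\{i_1,\dots,i_j\}\bigr), \qquad t_j := (k-j)\varepsilon n/k.
\]
Then $A_0=S_n(a)$, $A_k=1$, and one has the recursion $A_{j-1} = Y_{i_j}\,A_j + B_j$ where $B_j$ depends only on $(Y_\ell)_{\ell\ne i_j}$. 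If $|A_0|<e^{-t_0}$ while $|A_k|=1\ge e^{-t_k}$, there must exist $j\in\{1,\dots,k\}$ at which a ``bad transition'' occurs, namely $|A_j|\ge e^{-t_j}$ but $|A_{j-1}|<e^{-t_{j-1}}$. Conditioning on $(Y_\ell)_{\ell\ne i_j}$ and on $\xi_{i_j}\in E^*$, both $A_j$ and $B_j$ become constants and, on $\{|A_j|\ge e^{-t_j}\}$, the transferred Frostman bound for $Y_{i_j}$ gives
\[
\P\bigl(|A_{j-1}|<e^{-t_{j-1}} \,\big|\, (Y_\ell)_{\ell\ne i_j}\bigr) \le C'\bigl(e^{-t_{j-1}}/|A_j|\bigr)^s \le C'\,e^{-s\varepsilon n/k},
\]
using $t_{j-1}-t_j=\varepsilon n/k$. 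A union bound over $j$ together with the Chernoff estimate on $|I|<k$ gives $\P(|S_n(a)|<e^{-\varepsilon n}) \le e^{-\Omega(n)} + kC'e^{-s\varepsilon n/k}$, which is summable precisely in the regime $k=o(n/\log n)$.

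The main obstacle I foresee is the preparatory step: turning the purely qualitative assumption $\underline{\dim}_\nu>0$ on $E$ into a Frostman bound that is uniform in the centre $y$, and then transferring it cleanly across the inversion $\xi\mapsto 1/(a-\xi)$ near its singularity at $\xi=a$. Once that uniform bound is established, the peeling recursion is completely elementary, and it simultaneously explains the $k=o(n/\log n)$ threshold: each of the $k$ layers contributes a factor $e^{-\Omega(\varepsilon n/k)}$, so the total bound beats every polynomial in $n$ exactly when $n/k\gg\log n$.
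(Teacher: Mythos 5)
Your proposal is correct and follows essentially the same two-step strategy as the paper: first extract a uniform Frostman estimate from the qualitative dimension-nondegeneracy hypothesis, restrict to the (Chernoff-controlled) set of indices landing in the good set, and transfer the bound through the inversion $\xi\mapsto 1/(a-\xi)$; then peel the elementary symmetric polynomial one variable at a time, losing a factor $k$ and gaining $e^{-\Omega(\varepsilon n/k)}$ per layer, which is summable exactly when $k=o(n/\log n)$. Your ``first bad transition'' union bound over $j$ is the same estimate as the paper's inductive recursion $S_{k,n}=Y_1S_{k-1,n-1}+S_{k-1,n}$ with a supremum over shifts $w$, just packaged differently. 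The one step you flag as an obstacle and leave vague -- turning $\underline{\dim}_\nu>0$ on $E$ into a uniform Frostman bound -- is not really an Egorov argument (the hypothesis is a pointwise $\liminf$ lower bound, not a.e.\ convergence); the paper instead writes $E$ as the countable union of the sets $E_{q,r_0}=\{x:\nu(D(x,r))\le r^q\ \forall r<2r_0\}$ over rational $q,r_0$, picks one of positive $\nu$-mass, and restricts $\nu$ to it, which gives the uniform bound cleanly and is the natural formalization of the reduction you sketch.
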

 \begin{proof}[Proof of Lemma \ref{lm:upper}] For $\lambda_{\mathcal{M}}$-almost every $u \in \mathcal{M}$, $u^{-1}(\mathbf S^1)$ is a circle $\mathbf{S}(a, r)$ where $a = u^{-1}(0)\in \C$ and $r\in (0, \infty)$.
 	We have
 	\begin{eqnarray*}
 		\|S_n\|_{\mathbf S(a, r)} &\le& {n\choose k} \sup_{i_1<\dots<i_k} \frac{1}{||a| - |\xi_{i_1}||}\dots  \frac{1}{||a| - |\xi_{i_k}||}\\
 		&\le& {n\choose k} \left (\sum_{i=1}^{n}\frac{1}{\left ||a| - |\xi_{i}|\right |^{1/2}}\right )^{2k}\\
 		&\le& {n\choose k} \left (O(n)\right )^{2k} = (O(n))^{2k}.
 	\end{eqnarray*}
 	Therefore, 
 	$$\log ||S_n||_{\mathbf S(a, r)} \ll k\log n\le \ep n$$
 	as desired.   
 \end{proof}
 Finally, we prove the anti-concentration bound.
 \begin{proof}[Proof of Lemma \ref{lm:lower}] By Borel-Cantelli Lemma, it suffices to show that there exists a sequence $p_n$ such that $\sum_n p_n<\infty$ and 
 	\begin{equation}\label{eq:smallball}
 		\P( |S_{k, n}(a)| \le  e^{-\ep n})\le p_n.
 	\end{equation}
 	
 	Since $\mu$ is dimension-nondegenerate, there exists a measure $\nu$ and a set $E$ as in Definition \ref{def:dim}. Note that there exists a constant $R>0$ for which $\nu(E\cap D(0, R))>0$. Therefore, replacing $E$ by $E\cap D(0, R)$ and $\nu$ by the measure $\nu$ restricted on $E\cap D(0, R)$ (which only makes the lower local dimension bigger), we can assume that $E\subset D(0, R)$ and $\nu(\C\setminus E) = 0$. Next, for each $q\in \mathbb Q_+$ and $r_0\in \mathbb Q\cap (0, 1)$, let
 	$$E_{q, r_0}:= \{x\in E: \nu(D(x, r))\le r^{q}, \ \forall r\in (0, 2r_0)\}.$$
 	If for $x\in E$, $\underline{\dim}_{\nu}(x)=s>0$ then  there exists $r_0<1$ such that for all $r < r_0$, it holds that
 	$$ \frac{\log \nu(D(x,r))}{\log r} \ge s/2$$
 	which implies 
 	$$  \nu(D(x,r) \le r^{s/2}\le r^q, \ \forall q\ge s/2.$$
 	Therefore, $$E  =\cup_{q, r_0} E_{q, r_0}.$$
 	By countability of the set of $(q, r_0)$, there exist $q$ and $r_0$ such that
 	$$\nu(E_{q, r_0})>0.$$
 	We then further replace $E$ by $E_{q, r_0}$ and $\nu$ by its restriction on this set and fix the choice of $q$ and $r_0$. With this choice, we have $\nu(E)>0$, $\nu(\C\setminus E) = 0$ and for all $x\in E$, for all $r<2r_0$, 
 	$$ \nu(D(x, r))\le r^{q}.$$
 	This implies that for all $x\in \C$, for all $r<r_0$,
 	 \begin{equation}
 	 	 \nu(D(x, r))\le r^{q}. \label{eq:xr}
 	 \end{equation}
 	We recall that $S_{k, n}(a)$ can be rewritten as a multivariate polynomial of degree $k$:
 	$$S_{k, n}:=S_{k, n}(a)= \sum_{i_1<i_2<\dots<i_k} Y_{i_1}\dots Y_{i_k}$$
 	where $Y_i = \frac{1}{a-\xi_i}$.
 	
 	The sampling of independent copies $\xi_i\sim \mu$ can be done in a way that separates the sources $\mu - \nu$ and $\nu$ as follows. Let $p = \mu(\C) - \nu(\C)\in [0, 1)$, we first sample independent copies $\eta_i\sim \text{Ber}(p)$. If $\eta_i=1$, we sample $\xi_i$ from $\frac{\mu-\nu}{p}$; otherwise, we sample $\xi_i$ from $\frac{\nu}{1-  p}$. Let $\mathcal I = \{i: \eta_i = 0\}$. Conditioning on $\eta_1, \dots, \eta_n$ and on $\xi_i$ with $i\notin \mathcal I$, the source of randomness remains is from $(\xi_i)_{i\in \mathcal I}$ which means they are sampled from the probability measure $\frac{\nu}{1-  p}$. 
 	
 	Let  $K = |\mathcal I|$. 
 	The anti-concentration bound is then reduced to this randomness:
\begin{eqnarray*}
 \P( |S_{k, n}(a)| \le  e^{-\ep n}) &=&\P(|S_{k, n}(a)|\le \gamma_n^k)  \\
	&\le&  \P(|S_{k, n}(a)|\le \gamma_n^k, K\ge (1-p)n/2) + \P(K< (1-p)n/2)\\
	&\le& \sup  \P_{(\xi_i)_{i\in \mathcal I}} (|S_{k, n}(a)|\le \gamma_n^k\big \vert \eta_1, \dots, \eta_n, (\xi_i)_{i\notin \mathcal I}) + e^{-cn}
\end{eqnarray*}
 where $\gamma_n  = e^{-\ep n/k}$ and the supremum runs over all possible realizations of $\eta_1, \dots, \eta_n, (\xi_i)_{i\notin \mathcal I}$ with $K\ge (1-p)n/2$. The upper-bound $e^{-cn}$ for $\P(K< (1-p)n/2)$ comes from Chernoff's inequality for some constant $c$.

In what follows, we condition on any realizations of $\eta_1, \dots, \eta_n, (\xi_i)_{i\notin \mathcal I}$ with $K\ge (1-p)n/2$.  Without loss of generality, we can assume that $\mathcal I = \{1, \dots, K\}$. Let
 	 $$\alpha_n=\sup_{w\in \C}\P\left (\left |\frac{1}{a - \xi}-w\right  |\le  \gamma_n\right )$$
 	 where $\xi\sim \nu/(1-p)$. 	 	 
 	 Since $$S_{k, n} = Y_1 S_{k-1, n-1}+S_{k-1, n}$$ with the $S_{k-1, n-1}$ and $S_{k-1, n}$ being independent of $Y_1$, we have for all $w\in \C$, 
 	 $$\P(|S_{k, n} - w|\le \gamma_n^{k}) \le \P(|S_{k, n} - w|\le \gamma_n^{k}, |S_{k-1, n-1}|\ge \gamma_n^{k-1}) +  \P(|S_{k-1, n-1}|\le \gamma_n^{k-1}).$$
 	 The first term on the right is bounded by
 	 $$\P\left (|Y_1+\frac{S_{k-1, n}-w}{S_{k-1, n-1}}|\le \frac{\gamma_n^{k}}{S_{k-1, n-1}}, |S_{k-1, n-1}|\ge \gamma_n^{k-1}\right ) \le \P\left (|Y_1+\frac{S_{k-1, n}-w}{S_{k-1, n-1}}|\le \gamma_n\right ) \le \alpha_n,$$
 where the last inequality follows	by the independence of $Y_1$ and $\frac{S_{k-1, n}-w}{S_{k-1, n-1}}$.
 	 Therefore, 
 	 $$\P(|S_{k, n} - w|\le \gamma_n^{k}) \le \alpha_n +  \P(|S_{k-1, n-1}|\le \gamma_n^{k-1}).$$
 Since $S_{k-1, n-1}$ has the same form as $S_{k, n}$ except with one degree smaller, we can repeating this multiple times to get
 	 $$\P(|S_{k, n} - w|\le \gamma_n^{k}) \le (k-1)\alpha_n +\sup_{w'\in \C}\P(|Y_k+\dots +Y_K+w'|\le \gamma_n) \le k\alpha_n$$
 	 where in the last inequality, we again use the independence between $Y_k$ and the rest of the $Y_i$, noting that $K=\Theta(n)\gg k$.

 	 Thus, it suffices to show that $\sum_{n} k \alpha_n <\infty$. To this end, for any fixed $a\in \C$, let $\pi$ be the distribution of $Y = \frac{1}{a - \xi}$ where $\xi\sim \frac{\nu}{1-p}$. Let $\nu_a$ be the distribution of $a - \xi$ which is supported inside the disk of radius $R_a  = R + |a|$ around 0. Let $T$ be the inversion map $z\mapsto z^{-1}$. Note that $T$ maps $D(0, R_a)$ into $\C\setminus \text{int}(D (0, r_a))$ some radius $r_a>0$. Then for all $w\in \C$, 
 	 $$\P\left (\left |\frac{1}{a - \xi}-w\right  |\le  \gamma_n\right )=\pi(D(w, \gamma_n)) = \nu_a(T^{-1}(D(w, \gamma_n)))=\nu_a(T(D(w, \gamma_n))).$$
 	 Let $c' = r_a/4$. If $D(w, c')$ contains $0$ then $T(D(w, \gamma_n))\subset T(D(w, c'))\subset T(D(0, 2c'))$ which is the outer circle of radius larger than $R_a$ for sufficiently large $n$, and hence  $\nu_a(T(D(w, \gamma_n)))=0$. 
 	 
 	 Therefore, we can assume that $0\notin D(w, c')$, which implies that $|w|\ge c'\gg \gamma_n$. In that case, the (pre)image of $D(w, \gamma_n)$ under $T$ is a circle of radius
 	 $$r = \frac{\gamma_n}{||w|^2 - \gamma_n^2|}\le \frac{ \gamma_n}{c'^2/2}.$$
 	 And then for all sufficiently large $n$ so that $\frac{ \gamma_n}{c'^2/2}<r_0$, we get by \eqref{eq:xr},
 	 $$k\pi(D(w, \gamma_n)) \le k\nu_a(D(w', \frac{ \gamma_n}{c'^2/2}))\ll k \gamma_n^{q}.$$
 	 For $k = o(\frac{n}{\log n})$, we have
 	 $$\sum_{n} k \gamma_n^{q} = \sum_{n} k e^{-cqn/k}<\infty.$$
 	 This completes the proof.
 \end{proof}

 \section{Proof of Theorem \ref{thm:2}}
 \subsection{Doeblin condition}
   As before, we reduce to proving the anti-concentration Lemma \ref{lm:lower} (and in particular, \eqref{eq:smallball}) for the case that $\mu$ satisfies Condition \eqref{cond:0} in Theorem \ref{thm:2}.  
 Assume for the moment that $Y_i$ were uniform on some disk $B\subset \C$ of area 1. We recall Carbery-Wright inequality \cite[Theorem 8]{carbery2001distributional}.
 \begin{theorem}
 	There exists an absolute constant $C$ such that if $p: \R^{n}\to \C$ is a polynomial of degree at most $k$ and $\pi$ is a log-concave measure on $\R^{n}$ then for all $\alpha>0, q>0$,
 	\begin{equation}\label{eq:CW}
 		\pi(x\in \R^{n}: |p(x)|\le \alpha)\le \frac{Cq\alpha^{1/k}}{\left (\int |p(x)|^{q/k}d\pi (x)\right )^{1/q}}
 	\end{equation}
 \end{theorem}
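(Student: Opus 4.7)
The plan is to convert the small-ball probability $\pi(|p|\le \alpha)$ into a negative-moment estimate via Chebyshev's inequality and then exploit the log-concavity of $\pi$ to compare negative and positive moments of $|p|^{1/k}$. Concretely, for any $s>0$,
\[
\pi(x\in \R^{n}:|p(x)|\le \alpha)\;\le\;\alpha^{s/k}\int |p(x)|^{-s/k}\,d\pi(x),
\]
so it is enough to bound the negative moment $\bigl(\int |p|^{-s/k}d\pi\bigr)^{1/s}$ from above by the positive moment $\bigl(\int |p|^{q/k}d\pi\bigr)^{-1/q}$ times a factor linear in $q$.

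The central technical input will be a reverse Khintchine-type inequality for polynomials under log-concave measures: for $p$ of degree at most $k$ and $\pi$ log-concave, all $L^{s}$-norms of the random variable $|p|^{1/k}$ are mutually equivalent up to multiplicative constants of the form $C\max(|s|,|s'|)/\min(|s|,|s'|)$, valid both for positive and (in a suitable range) negative $s$. This comparison follows from Borell's lemma applied to the distribution function of $|p|^{1/k}$, whose essentially convex sublevel sets are handled via the Brunn--Minkowski / Prékopa--Leindler inequalities combined with the homogeneity of the polynomial. Choosing $s$ of order one and tracking the constants in the moment comparison produces the linear factor in $q$ appearing on the right-hand side of \eqref{eq:CW}.

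The main obstacle is obtaining the sharp exponent $1/k$ on $\alpha$; the moment-equivalence argument alone gives only a qualitative bound. My strategy would be a one-dimensional reduction: disintegrate $\pi$ along generic lines, noting that the restriction of $p$ to any line is a univariate polynomial of degree at most $k$ and that the corresponding conditional measure remains log-concave by Prékopa--Leindler. In the univariate case, factor the polynomial as $p(t)=c\prod_{j=1}^{k}(t-\zeta_j)$ and estimate each local integral $\int |t-\zeta_j|^{-s}\rho(t)\,dt$ using that a log-concave density $\rho$ is bounded near each root; distributing the exponent $s/k$ across the $k$ factors via Hölder yields the sharp $\alpha^{1/k}$ scaling. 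Recombining by integrating over the space of directions and optimizing in $s$ then produces the stated inequality with the correct dependence on both $\alpha$ and $q$.
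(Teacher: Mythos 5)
This statement is cited in the paper as \cite[Theorem 8]{carbery2001distributional} and is not reproved there, so there is no in-paper proof to compare against; what follows is an assessment of your blind attempt on its own merits.

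There is a genuine gap at the very first step. The Chebyshev/Markov reduction
\[
\pi\bigl(|p|\le\alpha\bigr)\;\le\;\alpha^{s/k}\int |p|^{-s/k}\,d\pi
\]
cannot produce the sharp exponent $\alpha^{1/k}$. To get $\alpha^{1/k}$ you would need $s=1$, but the negative moment $\int |p|^{-1/k}\,d\pi$ can diverge: take $p(t)=t^{k}$ on $[-1,1]$ with $\pi$ uniform, for which $\int |p|^{-s/k}\,d\pi=\tfrac{1}{1-s}\to\infty$ as $s\uparrow 1$, while $\pi(|p|\le\alpha)=\alpha^{1/k}$ exactly. Optimizing your bound over $s<1$ gives roughly $\alpha^{s/k}/(1-s)$, whose infimum is of order $\alpha^{1/k}\,\log(1/\alpha)/k$; the Markov route is therefore off by a factor $\log(1/\alpha)$ from the stated inequality, and your step 4 (``distributing the exponent $s/k$ across the $k$ factors via H\"older yields the sharp $\alpha^{1/k}$ scaling'') does not repair this --- H\"older redistributes the singularity but preserves the power $\alpha^{s/k}$ coming from Markov, and the worst case $p(t)=(t-\zeta)^{k}$ still forces $s<1$. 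The moment-comparison (reverse Khintchine) step you invoke is also slightly misstated: the Carbery--Wright $L^{q}$ comparison has constants of the form $(Cq/q')^{k}$, not $C\max/\min$, and in any case it cannot be pushed to $L^{-1/k}$ because that norm need not be finite.

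The one-dimensional reduction along lines is the right idea and is indeed how Carbery and Wright proceed (a log-concave density restricted to a line is log-concave, and the restriction of $p$ has degree at most $k$), but the one-dimensional input must be a \emph{direct distributional} estimate rather than a negative-moment bound. The key facts are: (i) for a monic univariate polynomial of degree $k$, the sublevel set $\{t:|p(t)|\le\alpha\}$ is a union of at most $k$ intervals of total length at most $4\alpha^{1/k}$ (a Chebyshev/Remez-type bound), which is where the sharp $\alpha^{1/k}$ comes from; and (ii) a Borell/Lov\'asz--Simonovits localization comparing the $\pi$-measure of such a union of intervals with the $L^{q/k}$-norm of $p$, which is where the log-concavity enters and produces the factor $Cq$. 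Your outline omits the Remez ingredient entirely, and it is precisely what makes the exponent on $\alpha$ sharp without a logarithmic loss.

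As a side remark, for the application in this paper ($\alpha=e^{-\ep n}$, $k=o(n/\log n)$) the extra $\log(1/\alpha)=\ep n$ factor from the Markov route would actually be harmless, so a weaker version of the inequality along your lines would still serve the purpose; but it would not prove the theorem as stated.
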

 We apply this theorem to $p=S_{n}$, $\alpha =e^{-\ep n}$, $q = 2k$ and $\pi=\text{Unif}(B)$. 
 To evaluate the second moment, we write $c = \E Y_i$, $\overline Y_i = Y_i - c$ and 
 $$S_n=\sum_{i_1<\dots<i_k} \prod_{j=1}^{k} (\overline Y_{i_j}+c)=:\sum_{I\subset [n]} \alpha_I\prod_{i\in I} \overline Y_i$$
 where the sum runs over subsets of indices of cardinality at most $k$ and $\alpha_I\in \C$ are deterministic coefficients. We note that for $I$ with cardinality $k$, $\alpha_I = 1$.
 We have by independence and the fact that $\E \overline Y_i = 0$, 
 \begin{eqnarray*}
 	\E |S_n|^2 &=& \sum_{I\subset [n]} |\alpha_I|^{2} \E |\overline Y_1|^{2} + \sum_{I\neq J} \alpha_I \overline \alpha_J \E (\prod_{i\in I} \overline Y_i \prod_{i\in J} \overline Y_i) \notag\\
 	&=& \sum_{I\subset [n]} |\alpha_I|^{2} \E |Y_1|^{2} \ge \sum_{|I| = k}  \E |Y_1|^{2} = {n\choose k} \E |Y_1|^2.
 \end{eqnarray*}
 Thus, 
 \begin{eqnarray*}
 	\left (\int |p(x)|^{2}d\phi (x)\right )^{1/(2k)}&=& \left (\E |S_n|^{2}\right )^{1/(2k)}	\ge   C {n \choose k}^{1/2k}.
 \end{eqnarray*}
 By Stirling's formula, for $k = o(n)$, this is of order 
 $$\Theta\left (\sqrt {\frac{n}{k}} \left (1 + \frac{k}{n-k}\right )^{(n-k)/(2k)}\right ) = \Theta(1)\sqrt{\frac{n}{k}}.$$
 So then, the right-hand of \eqref{eq:CW} becomes
 $$\frac{Ck^{3/2}}{e^{\ep n/k}\sqrt n}\le \frac{1}{n^2},$$
 for sufficiently large $n$, as $k=o( \frac{n}{\log n})$. Therefore, 
 $$\P_{Y_i\overset{iid}{\sim} \text{Unif}(B)}\left ( \left |\sum_{i_1<\dots<i_k} Y_{i_1}\dots Y_{i_k}\right | \le  e^{-\ep n}\right )\le n^{-2}.$$

 In the general case where $Y_i$ is not uniform on a disk, we use the following lemma, known as Nummelin's splitting allows us to decompose $Y_i$ into uniform random variables and the rest. This decomposition, in the real setting, has been used in several places such as \cite{poly2012dirichlet}, \cite{nourdin2015invariance}, \cite{BallyCaramellino2019} and references therein. Because we could not locate a reference for the complex case, we provide a proof of the lemma below.
 
 \begin{lemma}\label{lm:splitting}
 	Assume that the measure $\mu$ satisfies the Doeblin's condition \ref{cond:doeblin}, for all $z\neq z_0$, there exist parameters $c_a\in (0, 1), r_a>0, w_a\in \C$ such that  
 	$$Y_i = \frac{1}{z-\xi_i} \overset{d}{=} \ep_i Z_i + (1-\ep_i)W_i$$
 	where $\ep_i, Z_i, W_i$ are independent random variables with $\ep_i\sim \text{Ber}(c_a)$, $Z_i\sim \text{Unif}(B(w_a, r_a))$.
 \end{lemma}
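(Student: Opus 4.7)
The plan is to exploit the uniform sub-component that the Doeblin condition provides on $B(z_0, r_0)$, push it forward through the Möbius map $T(\xi) := 1/(z-\xi)$, and extract a second uniform sub-component from the resulting pushforward density. This will yield a two-stage Bernoulli splitting of $Y_i$. Since $z \neq z_0$, I will pick $\rho \in (0, \min(r_0, |z-z_0|))$ small enough that $p := c_0 \pi \rho^2 < 1$, so that $\overline{B(z_0, \rho)} \subset B(z_0, r_0)$ and the pole $\xi = z$ of $T$ lies outside this closed sub-disk. The Doeblin condition then yields $\mu \geq p \cdot \text{Unif}(B(z_0, \rho))$ as sub-measures, hence the mixture
$$\mu = p \cdot \text{Unif}(B(z_0, \rho)) + (1-p)\mu'$$
for some probability measure $\mu'$.

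Next I will analyze the pushforward of the uniform piece. Because Möbius maps send Euclidean circles to Euclidean circles and the pole $z$ is outside $\overline{B(z_0, \rho)}$, the image $T(B(z_0, \rho))$ is again a Euclidean disk, which I will denote $B(w_a, r_a)$. The real Jacobian of $T$ at $\xi$ is $|T'(\xi)|^2 = |z-\xi|^{-4}$, so the change-of-variables formula produces the pushforward density
$$f(y) = \frac{1}{\pi \rho^2 |y|^4}, \qquad y \in B(w_a, r_a).$$
On this image disk, $|y| = |z-\xi|^{-1} \leq (|z-z_0| - \rho)^{-1}$, so $f(y) \geq c := (|z-z_0| - \rho)^4/(\pi \rho^2) > 0$.

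From the pointwise bound $f \geq c \cdot \mathbf{1}_{B(w_a, r_a)}$ I will extract a second mixture
$$T_*\,\text{Unif}(B(z_0, \rho)) = q \cdot \text{Unif}(B(w_a, r_a)) + (1-q)\widetilde Q,$$
with $q := c \pi r_a^2 \in (0, 1]$ and $\widetilde Q$ some probability measure. Composing with the first splitting, the law of $Y_i = T(\xi_i)$ is $c_a \cdot \text{Unif}(B(w_a, r_a)) + (1-c_a) W$, where $c_a := pq \in (0,1)$ and $W$ is the appropriate mixture of $\widetilde Q$ and $T_*\mu'$. Defining independent $\ep_i \sim \text{Ber}(c_a)$, $Z_i \sim \text{Unif}(B(w_a, r_a))$, $W_i \sim W$ on an enlarged probability space, the random variable $\ep_i Z_i + (1-\ep_i)W_i$ has exactly this law, giving the equality in distribution required by the lemma.

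The only substantive point is the second paragraph: one must observe that the Möbius pushforward of a uniform disk measure still dominates a uniform measure on its image disk. This follows from the circle-preservation property of Möbius transformations (so the image is a disk) combined with the pointwise lower bound on $|y|^{-4}$, which is precisely what the separation $z \notin \overline{B(z_0, \rho)}$ provides. Everything else is elementary mixture bookkeeping, and I anticipate no conceptual obstacle beyond correctly organizing the two splittings.
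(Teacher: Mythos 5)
Your proof is correct and follows essentially the same approach as the paper: extract a uniform sub-component from $\mu$ via the Doeblin condition, push it forward through $T(\xi)=1/(z-\xi)$, and extract a second uniform sub-component from the pushforward to obtain the Bernoulli mixture. The one nice simplification is your use of the circles-to-circles property of M\"obius maps, which identifies $T(B(z_0,\rho))$ as a disk directly and lets you write the pushforward density $f(y)=1/(\pi\rho^2|y|^4)$ in closed form, whereas the paper argues more abstractly via two-sided Jacobian bounds for $T_a$ on an arbitrary open image $D_a$ and then selects a ball inside it.
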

 Applying this lemma, we can then write $S_n$ in terms of the $\ep_i, Z_i, W_i$.
 Conditioned on the $\ep_i$ and $W_i$, $S_n$ is a polynomial of degree $k$ on the random variables $Z_i$ which can be written as
 $$S_n = \sum_{i_1<\dots<i_k,\, i_j\in \mathcal I} Z_{i_1}\dots Z_{i_k}+T_n$$
 where $\mathcal I = \{i\in [n]: \ep_i=1\}$ and $T_n$ is a polynomial of degree $k-1$ in the $Z_i$.
 Thus, the second moment of $S_n$ with respect to $Z_i$ is at least $C{{|\mathcal I|}\choose k}$ as before, for some constant $C$.
 By Carbery-Wright inequality applying to the polynomial $S_n$ of uniform random variables $Z_i$, we have
 $$\P_{Z_i}(|S_n|\le e^{-\ep n})\le \frac{C ke^{-\ep n/k}}{{{|\mathcal I|}\choose k}^{1/(2k)}}.$$
 By Chernoff's inequality, with probability at least $e^{-c_{a}n}$, we have $|\mathcal I|\ge c_an/2$. Thus, 
 $$\P(|S_n|\le e^{-\ep n})\le e^{-c_{a}n}+ c_a^{-2}n^{-2}$$
 which remains summable. This completes the proof of Lemma \ref{lm:lower}.
 
 \begin{proof} [Proof of Lemma \ref{lm:splitting}]
 	Let
 	\[
 	Y := \frac{1}{a - \xi}, \qquad \xi \sim \mu.
 	\]
 	We first show that $Y$ also satisfies the Doeblin's condition, possibly with different parameters. Indeed, set $\delta := |a - z_0|>0$ and choose
 	\[
 	r_a := \min\{r_0, \delta/2\}.
 	\]
 	Then for every $w \in B(z_0, r_a)$ we have $|a - w|\ge \delta/2.$
 	Define
 	\[
 	T_a : B(z_0, r_a) \to \C, \qquad T_a(w) := \frac{1}{a - w}.
 	\]
 	The map $T_a$ is holomorphic on $B(z_0, r_a)$, with derivative
 	\[
 	T_a'(w) = \frac{1}{(a-w)^2}.
 	\]
 	Since on $B(z_0,r_a)$ we have
 	\[
 	\delta/2 \le |a-w| \le |a - z_0| + r_a \le \delta + \delta/2 = 3\delta/2,
 	\]
 	we obtain that $ |T_a'(w)|$ is uniformly bounded above and below. Therefore, $T_a$ is a $C^1$-diffeomorphism from $B(z_0, r_a)$ onto the open set
 	\[
 	D_a := T_a\big(B(z_0, r_a)\big).
 	\]
 	Moreover, the Jacobian of the inverse $T_a^{-1}$ is
 	\[
 	J_{T_a^{-1}}(y) = \frac{1}{J_{T_a}(T_a^{-1}(y))},
 	\]
 	so it is also bounded above and below on $D_a$.
 	
 	\medskip
 	
 	Let $\nu_a$ be the distribution of $Y$, then for any Borel set $E \subset D_a$, we have $T_a^{-1}(E) \subset B(z_0, r_a) \subset B(z_0, r_0)$, so from \eqref{cond:doeblin},
 	\begin{equation}
 		\nu_a(E) = \mu\big(T_a^{-1}(E)\big) \ge c_0 \, \mathrm{Leb}\big(T_a^{-1}(E)\big).
 		\label{eq:nu-lower}	 		
 	\end{equation}
 	Since $T_a$ is a diffeomorphism on this set and $J_{T_a^{-1}}$ is bounded below there exists $m_a>0$ (depending only on $a, z_0, r_0, c_0$) such that
 	\begin{equation}
 		\mathrm{Leb}\big(T_a^{-1}(E)\big)
 		= \int_{E} J_{T_a^{-1}}(y)\, dy
 		\ge m_a \int_{E} dy
 		= m_a \, \mathrm{Leb}(E)
 		\qquad \text{for all Borel } E \subset D_a.
 		\label{eq:jac-lower}
 	\end{equation}
 	Combining \eqref{eq:nu-lower} and \eqref{eq:jac-lower} we get
 	\begin{equation}
 		\nu_a(E) \ge c_0 m_a \, \mathrm{Leb}(E)
 		\qquad \text{for all Borel } E \subset D_a.
 		\label{eq:doeblin-Y-a}
 	\end{equation}
 	
 	\medskip
 	
 	Since $D_a$ is open, there exists a ball $B(w_a, r_a) \subset D_a$.  
 	Then from \eqref{eq:doeblin-Y-a},
 	\[
 	\nu_a(E) \ge c_0 m_a \, \mathrm{Leb}(E)
 	\qquad \text{for all Borel } E \subset B(w_a, r_a).
 	\]
 	Thus, $Y$ satisfies the Doeblin's condition. We then set
 	\[
 	c_a := c_0 m_a \mathrm{Leb}\big(B(w_a, r_a)\big) \le \nu_a(B(w_a, r_a)).
 	\]
 	Thus, $c_a\in (0, 1]$.
 	If $c_a=1$ the decomposition is trivial; otherwise we define the uniform measure
 	\[
 	\lambda_a(E) := \frac{\mathrm{Leb}(E \cap B(w_a, r_a))}{\mathrm{Leb}(B(w_a, r_a))}.
 	\]
 	Then for every $E \subset B(w_a, r_a)$,
 	\begin{equation}
 		\nu_a(E) \ge c_a \, \lambda_a(E).
 	\end{equation}
 	
 	\medskip
 	
 	Define the probability measure
 	\[
 	\eta_a := \frac{\nu_a - c_a \lambda_a}{1 - c_a}.
 	\]
 	Then we have the decomposition
 	\[
 	\nu_a = c_a \lambda_a + (1-c_a)\eta_a.
 	\]
 	Equivalently, we can realize $Y$ as
 	\[
 	Y \overset{d}{=} \ep Z + (1-\ep) W,
 	\]
 	where $\ep  \sim \mathrm{Ber}(c_a)$, $Z \sim \lambda_a$ , $W \sim \eta_a$, and $\delta_a, Z, W$ are independent.
 \end{proof}

 \subsection{Generic measures} 
  It suffices to prove \eqref{eq:smallball2}. For almost all $a\in \C$, by the hypothesis of Theorem \ref{thm:2} \eqref{cond:mean}, we know that
 $\E Y_i=\int_{\C} \frac{1}{a-u}d\mu(u)=: c\neq 0$
 where $c$ depends only on $a$ and $\mu$. We then have
 $$|\E S_n| = |c|^{k}{n\choose k}\to \infty\text{ as } n\to\infty \text{ for $k=o(n)$}.$$
 To calculate higher moments of $S_n$, we first normalize the  $Y_i$ as $Y_i  = \sigma X_i +c$ where $\sigma = \sqrt{\Var(Y_i)}$. The random variables $X_i$ are independent with mean 0, variance 1. We have
 \begin{eqnarray*}
 	T_n:= S_n -\E S_n &=& \sum_{i_1<\dots<i_k}  \sum_{r=1}^{k} \sum_{j_1<\dots<j_r \text{ subset of } i_1<\dots <i_k}c^{k-r} \sigma^{r} X_{j_1}\dots X_{j_r}\\
 	&=& \sum_{r=1}^{k} c^{k-r} \sigma^{r} {n\choose {k-r}}\sum_{j_1<\dots<i_r} X_{j_1}\dots X_{j_r}
 \end{eqnarray*}
 Therefore,
 \[
 \mathbb E[|T_n|^2]=\sum_{r=1}^k N_r\, \sigma^{2r} |c|^{2k-2r}.
 \]
 where
 \[
 N_r=\binom{n}{r}\binom{n-r}{k-r}^{2}.
 \]
 The quotient between two consecutive terms in the summation is 
 \begin{eqnarray*}
 	\frac{N_{r} \sigma^2}{N_{r-1}|c|^{2}} = \frac{(k-r+1)^{2}\sigma^{2}}{(n-r+1)r|c|^{2}}=o_{a}(1)
 \end{eqnarray*}
 as $k = o(\sqrt n)$. And so,
 \begin{equation}
 	\E |T_n|^{2}\ll N_1|c|^{2k-2}\sigma^2 = \frac{k^{2}\sigma^2}{n|c|^{2}} |\E S_n|^{2}.\label{eq:second:moment}
 \end{equation}
 Therefore, as $k = o(\sqrt n)$, we have by Chebyshev's inequality,
 $$\P(\log |S_n(a)| \le -\ep n) \le \P(|S_n(a) - \E S_n(a)|\ge |\E S_n(a)|/2)\ll \frac{\Var S_n(a)}{|\E S_n(a)|^{2}}=o(1)$$
 as desired and concluding the proof.

 \section{Proof of Theorem \ref{thm:perturb}}
 We will adapt the proof in Section \ref{sec:proof:mu}. By Lemma \ref{lm:logpotential}, it would be desirable to show that for all $\ep>0$ and all cluster value $\hat \mu_n$ of the sequence $\mu_{P_n^{(k)}}$ in $ \mathcal P(\hat \C)$, we have 
 \begin{equation}
 	\int_{\widehat{\mathbb C}} \log^{-}|u|\, d\widehat \mu_\infty \le \int_{\widehat{\mathbb C}} \log^{-}|u|\, d\mu + \ep \notag
 \end{equation}
 for $\lambda_{\mathcal M}$–almost every Möbius transformation  $u \in \mathcal M$. 
 However, the left-hand side contains some contribution from $\int_{\widehat{\mathbb C}} \log^{-}|u|\, d \nu$ which may be infinite. To avoid this issue, we instead show that for all $\ep, \ep'>0$, for almost all $u\in \mathcal M$,
 \begin{equation}
 	\int_{\widehat{\mathbb C}} \log^{-}|u|\, d\widehat \mu_\infty \le \int_{\widehat{\mathbb C}} \log^{-}|u|\, d(\mu+\ep \nu) + \ep.\label{eq:mun:mu:2}
 \end{equation}
 By Lemma \ref{lm:logpotential}, we get $\widehat \mu_\infty \le \mu+\ep \nu$ for all $\ep$ which in turns shows that $\widehat \mu_\infty \le \mu$ and hence $\widehat \mu_\infty = \mu$ as they are probability measures.

 To prove \eqref{eq:mun:mu:2}, as we have shown in \eqref{eq:limsup},
 \begin{equation}
 	\int_{\widehat{\mathbb{C}}} \log^- |u_i| \, d\widehat{\mu}_\infty
 	\le \limsup_{n \to \infty} \int_{\mathbb{C}} \log^- |u_i| \, d\mu_{P_{n}^{(k)}}(u). \label{eq:nu1}
 \end{equation}
 Next, observe that a way to sample $\xi_i\sim \mu_n$ is by first sampling $\theta_i\sim \text{Ber}(\alpha_n)$, $\zeta_i\sim \mu$, $\eta_i\sim \nu$ and then setting
 $$\xi_i = \zeta_i \textbf{1}_{\theta_i = 0} + \eta_i \textbf{1}_{\theta_i=1}.$$
 We then get
 \begin{eqnarray*}
 	\frac1n\sum_{i=1}^{n} \log^-|u(\xi_i)| &=& \frac1n\sum_{i: \theta_i=0} \log^-|u(\zeta_i)| + \frac1n\sum_{i: \theta_i=1} \log^-|u(\eta_i)|\\
 	&\le& \frac1n\sum_{i=1}^{n} \log^-|u(\zeta_i)| + \frac{N_n}{n}\frac{1}{N_n}\sum_{i: \theta_i=1} \log^-|u(\eta_i)|
 \end{eqnarray*}
 where $N_n\sim \text{Bin}(n, \alpha_n)$ is the number of indices $i$ for which $\theta_i = 1$.
 The first sum, as seen before, by the Law of Large Numbers,
 \begin{equation}
 	\frac1n\sum_{i=1}^{n} \log^-|u(\zeta_i)| \to \int_{\mathbb{C}} \log^- |u_i|d\mu \text{ almost surely.}\label{eq:nu2}
 \end{equation}

 We claim that
 \begin{equation}
 	N_n\to \infty \text{ almost surely}\label{eq:Nn1}
 \end{equation}
 and
 \begin{equation}
 	\frac{N_n}{n}\to 0 \text{ almost surely}\label{eq:Nn2}.
 \end{equation}
 To prove \eqref{eq:Nn1}, we apply the Chernoff lower-tail bound:  
 \[
 \P (N_n \le n \alpha_n/2 )\le \exp\!\left(- \frac18\,n \alpha_n\right).
 \]
 Since $n \alpha_n /\log n\to \infty$, the right-hand side is at most $n^{-2}$ for sufficiently large $n$ and hence $\sum_n \P(N_n \le n\alpha_n/2) < \infty$. By the Borel--Cantelli lemma,
 $\P(N_n \le n\alpha_n/2 \text{ i.o.}) = 0$. It follows that $N_n \to \infty$ almost surely.
 
 To prove \eqref{eq:Nn2}, for any $\delta > 0$, since $a_n \to 0$, there exists $n_0$ such that
 $a_n \le \delta/2$ for all $n \ge n_0$. For such $n$ we use the standard
 Chernoff bound for binomial variables:
 \[
 \P(N_n \ge \delta n)
 \le \left( \frac{e n a_n}{\delta n} \right)^{\delta n}
 = \left( \frac{e a_n}{\delta} \right)^{\delta n}.
 \]
 Since $a_n \to 0$, the right-hand side is summable in $n$, hence
 $\sum_{n} \P(N_n \ge \delta n) < \infty$. By the Borel--Cantelli lemma,
 $\P(N_n \ge \delta n \text{ i.o.}) = 0$. As this holds for
 $\delta = 1, 1/2, 1/3, \dots$, we conclude that $N_n/n \to 0$ almost surely.
 
 Now, using \eqref{eq:Nn1}, we obtain by the Law of Large Numbers that $\sum_{i: \theta_i=1} \log^-|u(\eta_i)|$ converges almost surely to its mean $ \int_{\C} \log^- |u|\, d\nu$, which by combining with \eqref{eq:Nn2}, gives
 \begin{equation}
 	\frac{N_n}{n}\frac{1}{N_n}\sum_{i: \theta_i=1} \log^-|u(\eta_i)|\le \ep \int_{\C} \log^- |u|\, d\nu \text{ \ almost surely eventually}.\label{eq:nu3}
 \end{equation}
 Combining \eqref{eq:nu1}, \eqref{eq:nu2}, and \eqref{eq:nu3}, we reduce the problem of proving \eqref{eq:mun:mu:2} to, as before, proving the corresponding version of \eqref{eq:S:u}, namely almost surely
 \begin{equation}
 	\limsup_{n\to \infty} \frac{\log||S_n||_{u^{-1}(\mathbf S^1)} - \log |S_n|(u^{-1}(0))}{n}\le \ep.\notag
 \end{equation}
 where
 $S_n = \frac{P_n^{(k)}}{k!P_n} = \sum_{i_1<\dots<i_k} Y_{i_1}\dots Y_{i_k}$ where $Y_{i_j} = \frac{1}{z - \xi_i}$. By the same argument, Lemma \ref{lm:upper} continues to hold. As a final step, we adapt the proof of Lemma \ref{lm:lower} by exploring the randomness of the $\eta_i$. 
 
 More specifically, we first sample the random variables $(\theta_i)$ and $(\zeta_i)$. This will determine the set $\mathcal I=\{i: \theta_i=1\}$ and its cardinality $N$. As we have proved before, almost surely, $N\ge n\alpha_n/2$ for all $n\ge n_0$ for some $n_0$ only determined by the values of $(\theta_i)$. Then, $S_n$ is now a degree-$k$ polynomial of $N$ iid random variables $\eta_i\sim \nu$, $i\in \mathcal I$. 
 
 Note that $S_n$ has the form
 $$S_n  = \sum_{I\subset \mathcal I} \alpha_I \prod_{i\in I} Y_i$$
 where the sum runs over all subsets $I$ of cardinality at most $k$ and for all $|I| = k$, we have $\alpha_I = 1$. Therefore, the rest of the proof of Lemma \ref{lm:lower} goes through as we replace $n$ by $N$ and $[n]$ by $\mathcal I$ giving us
 $$\P_{(\eta_i)}(|S_n|\le e^{-\delta n})\le \P_{(\eta_i)}(|S_n|\le e^{-\delta N}) \le  k e^{-q\delta N}+ e^{-c' N}\le k e^{-q\delta N} + e^{-c'\alpha_n n}\le n^{-2}$$
 as $k = o(\frac{\alpha_n n}{\log n})$. 
 By Borel-Cantelli lemma, almost surely, $|S_n|\le e^{-\delta n}$ eventually. This completes the proof.

 \bibliographystyle{alpha}

 \end{document}